\def\norm#1{\|#1\|}
\newcommand{\abs}[1]{\left\vert#1\right\vert}
\newenvironment{algorithm}[1]
{\vskip0.1cm\noindent\textsc{Algorithm } $($#1$)$.}{}
\newcommand{\eo}{\varepsilon_{\mathrm{out}}}
\newcommand{\ei}{\varepsilon_{\mathrm{in}}}
\newcommand{\Lp}{L_{\mathrm{p}}}
\newcommand{\rset}{\mathbb{R}}
\newcommand{\bmtrx}{\left[\begin{array}}
\newcommand{\emtrx}{\end{array}\right]}
\newcommand{\iprod}[1]{\left\langle#1\right\rangle}
\newcommand{\set}[1]{\left\{#1\right\}}
\title{Computational Complexity of Inexact Gradient Augmented Lagrangian Methods:  Application to Constrained MPC \\
{\tiny 10 September 2012}}
\author{Valentin Nedelcu$^{*}$, Ion Necoara\thanks{V. Nedelcu
and I. Necoara are with Automation and Systems Engineering
Department, University Politehnica of Bucharest, 060042 Bucharest,
Romania. {Email:\tt\small\{valentin.nedelcu,
ion.necoara\}@acse.pub.ro}} \and Quoc Tran Dinh\thanks{Q. Tran Dinh
is with Department of Electrical Engineering (ESAT-SCD) and
Optimization in Engineering Center (OPTEC), KU Leuven, Kasteelpark
Arenberg 10, B-3001 Leuven, Belgium. {Email:
\tt\small{quoc.trandinh@esat.kuleuven.be}}}. (Permanent address:
Faculty of Mathematics, Mechanics and Informatics, VNU University of
Science, Hanoi, Vietnam).   }
\begin{document}
\maketitle

\begin{abstract}
We study the  computational complexity certification of inexact
gradient augmented Lagrangian   methods for solving convex
optimization problems  with complicated constraints. We solve the
augmented Lagrangian dual problem that arises from the relaxation of
complicating constraints with  gradient and fast gradient methods
based on inexact first order information. Moreover, since the exact
solution of the augmented Lagrangian primal problem is hard to
compute in practice, we solve this problem up to some given inner
accuracy. We derive relations between the inner and the outer
accuracy of the primal and dual problems and we give a full
convergence rate analysis for both gradient and fast gradient
algorithms.  We  provide  estimates on the  primal and dual
suboptimality and  on  primal feasibility violation of the generated
approximate primal and dual solutions. Our analysis relies on the
Lipschitz property of the dual function and on inexact dual
gradients. We also discuss implementation aspects of the proposed
algorithms on constrained model predictive control problems for
embedded linear systems.
\end{abstract}

\begin{keywords}
Gradient and fast gradient methods, iteration-complexity
certification,  augmented Lagrangian, convex programming, embedded
systems, constrained linear model predictive control.
\end{keywords}

\pagestyle{myheadings} \thispagestyle{plain} \markboth{V. Nedelcu,
I. Necoara and Q. Tran Dinh}{Computational complexity of  inexact
gradient augmented Lagrangian methods}


\section{Introduction} \label{sec_intro}

Embedded control systems has been widely used in many applications
and its usage in industrial plants has increased concurrently. The
concept behind embedded control is to design a control scheme that
can be implemented on autonomous electronic hardware, e.g a
programmable logic controller \cite{ValRos:11}, a microcontroller
circuit board \cite{Zometa,RicJon:12} or field-programmable gate
arrays \cite{JerCon:11}. One of the most successful advanced control
schemes implemented in industry is model predictive control (MPC)
and this is due to its ability to handle complex systems with hard
input and state constraints.  MPC requires the solution of an
optimal control problem at every sampling instant at which new state
information becomes available. In the recent  decades there has been
a growing focus on developing faster MPC schemes, improving the
computational efficiency \cite{RaoWri:98} and providing worst case
computational complexity certificates for the applied solution
methods \cite{KogFin:11,LanMon:08,RicMor:11,RicJon:12}, making these
schemes feasible for implementation on  hardware with limited
computational power.

For fast embedded  systems \cite{HouFer:11,JerCon:11,RicJon:12} the
sampling times are   very short, such that any iterative
optimization algorithm must offer tight bounds on the total number
of iterations which have to be performed in order to provide a
desired optimal controller. Even if second order methods (e.g.
interior point methods) can offer fast rates of convergence in
practice, the worst case complexity bounds are high
\cite{BoyVan:04}. Further,  these methods have complex iterations,
involving inversion of matrices, which are usually difficult to
implement on embedded systems, where the units demand simple
computations. Therefore, first order methods are more suitable in
these situations~\cite{KogFin:11,RicJon:12}.

When the projection on the primal feasible set is hard to compute,
e.g. for constrained MPC problems,  an alternative to primal
gradient methods is to use the Lagrangian relaxation to handle the
complicated constraints and then to apply  dual gradient schemes.
The computational complexity certification of  gradient-based
methods for solving the (augmented) Lagrangian dual of a primal
convex problem  is studied e.g. in
\cite{DevGli:11,KogFin:11,LanMon:08,
NecSuy:08,NedOzd:09,PatBem:12j,RicMor:11,Roc:76}. In
\cite{DevGli:11} the authors present a general framework for
gradient  methods with inexact oracle, i.e. only approximate
information is available for the values of the  function and of its
gradient, and give convergence rate analysis. The authors also apply
their approach to gradient augmented Lagrangian methods and provided
estimates only for dual suboptimality. In \cite{Roc:76} an augmented
Lagrangian algorithm is analyzed  using the theory of monotone
operators. For this algorithm the author proves asymptotic
convergence under general conditions and local linear convergence
under second order optimality conditions. In
\cite{PatBem:12j,PatBem:12} a dual fast gradient method is proposed
for solving quadratic programs with linear inequality constraints
and estimates on primal suboptimality and infeasibility of the
primal solution are provided. In \cite{LanMon:08} the authors
analyze  the iteration complexity of an inexact augmented Lagrangian
method where the approximate solutions of the inner problems are
obtained by using a fast gradient scheme, while the dual variables
are updated by using an inexact dual gradient method. The authors
also provides  upper bounds on the total number of iterations which
have to be performed by the algorithm for obtaining a primal
suboptimal solution. In \cite{NecSuy:08} a dual method based on fast
gradient schemes and smoothing techniques of the ordinary Lagrangian
is presented. Using an averaging scheme the authors are able to
recover a primal suboptimal solution and provide estimates on both
dual and primal suboptimality and also on  primal infeasibility.


Despite widespread use of the dual gradient methods for solving
Lagrangian dual  problems, there are some aspects of these methods
that have not been fully studied. In particular, the previous work
has several limitations. First, the focus is mainly on the
convergence analysis of the dual variables. Second, only the dual
gradient method is usually analyzed and using exact information.
Third, there is no full convergence rate analysis (i.e. no estimates
in terms of dual and primal suboptimality and primal feasibility
violation) for both dual gradient and fast gradient schemes, while
using inexact dual information. Therefore, in this paper we focus on
solving convex optimization problems (possibly nonsmooth)
approximately by using an augmented Lagrangian approach and inexact
dual gradient and fast gradient methods. We show how approximate
primal solutions can be  generated based on averaging for general
convex  problems and we give a full convergence rate analysis for
both algorithms that leads to error estimates on the amount of
constraint violation and the cost of primal and dual solutions.
Since we allow one to solve the inner problems approximately, our
dual gradient schemes have to use inexact information.

\paragraph{Contribution} The contributions of this paper include the following:
\begin{enumerate}
\item We propose and analyze  dual  gradient  algorithms producing approximate primal
 feasible and optimal solutions.  Our analysis is based on the
augmented Lagrangian framework which leads to the dual function
having Lipschitz continuous gradient, even if the primal objective
function is not strongly convex.

\item Since  exact  solutions of the inner problems are  usually hard to compute,
 we solve these problems only up to a certain inner accuracy $\ei$.
We analyze several stopping criteria which can be used in order to find such a solution
and point out their advantages.

\item For solving  outer problem we propose two inexact dual gradient
algorithms:\\
- an inexact dual gradient algorithm, with  complexity
$\mathcal{O}(1/\eo)$ iterations, which allows us to  find an
$\eo$-optimal solution of the original problem by solving the inner
problems with
an accuracy $\ei$ of order $\mathcal{O}(\eo)$.\\
- an inexact dual fast gradient algorithm, with complexity
$\mathcal{O}(\sqrt{1/\eo})$ iterations, provided that the  inner
problems are solved with  accuracy $\ei$ of order $\mathcal{O}(\eo
\sqrt{\eo})$.

\item For both methods we show how to generate approximate primal solutions and
 provide estimates on  dual and primal suboptimality and  primal
infeasibility.

\item To certify  the complexity of the proposed methods, we apply the algorithms
on linear embedded MPC problems with state and input constraints.
\end{enumerate}

\paragraph{Paper outline}
The paper is organized as follows. In Section \ref{sec_intro},
motivated by embedded  MPC, we introduce   the augmented Lagrangian
framework for solving constrained convex  problems. In Section
\ref{sec_inner} we discuss different stopping criteria for finding a
suboptimal solution of the inner problems and provide estimates on
the complexity of finding such a solution. In Section
\ref{sec_outer} we propose an inexact dual gradient and fast
gradient algorithm for solving the outer problem. For both
algorithms we provide   bounds on the dual and primal suboptimality
and also on the primal infeasibility. In Section \ref{sec_MPC} we
specialize our general results  to  constrained linear MPC problems
and we obtain tight bounds on the number of inner and outer
iterations. We also provide extensive numerical tests to prove the
efficiency of the proposed algorithms.

\paragraph{Notation and terminology}
We work in the space $\rset^n$ composed by column vectors. For $x, y
\in \rset^n$,  $\iprod{x, y} := x^Ty = \sum_{i=1}^n x_i y_i$ and
$\norm{x} :=  (\sum_{i=1}^n x_i^2)^{1/2}$  denote the standard
Euclidean inner product and  norm, respectively. We use the same
notation $\iprod{\cdot, \cdot}$ and $\norm{\cdot}$ for spaces of
different dimension.  We denote by $\mathrm{cone}\{a_i, ~ i \in I
\}$ the cone generated from vectors $\{a_i, ~ i\in I\}$. We also
denote by $R_{\text{p}} := \max_{z,y \in Z} \norm{z-y}$ the
diameter, $\mathrm{int}(Z)$ the interior and $\mathrm{bd}(Z)$ the
boundary of a convex, compact set $Z$. By $\mathrm{dist}(y, Z)$ we
denote the Euclidean distance from a point $y$ to the set $Z$ and by
$h_Z(y) := \sup_{z \in Z} y^T z$ the support function of the set
$Z$. For any point $\tilde{z} \in Z$ we denote by
$\mathcal{N}_Z(\tilde{z}) :=\set{s ~|~ \iprod{s, z-\tilde{z}} \leq 0
~\forall z \in Z}$ the normal cone of $Z$ at $\tilde{z}$. For a real
number $x$, $\lfloor{x}\rfloor$ denotes the largest integer number
which is less than or equal to $x$, while ``$:=$'' means
``equal by definition''. 


\subsection{A motivating example: Linear MPC problems with state-input constraints}
\label{subsec_motivation} We consider a discrete time linear system
given by the dynamics:
\begin{equation*}
x_{k+1}= A_x x_k + B_u u_k,
\end{equation*}
where $x_k \in \rset^{n_x}$ represents the state and $u_k \in
\rset^{n_u}$ represents  the input of the system.  We also assume
hard state and input constraints:
\begin{equation*}
x_k \in X \subseteq \rset^{n_x}, \quad u_k \in U \subseteq
\rset^{n_u} \;\; \forall k \geq 0.
\end{equation*}
Now, we can define the linear MPC problem over the prediction
horizon of length $N$,  for a given initial state $x$, as follows
\cite{ScoMay:99}:
\begin{equation}\label{eq:LMPC}
f^{*}(x) := \left\{ \begin{array}{cl}
\displaystyle\min_{x_i, u_i}&\sum_{i=0}^{N-1}\ell(x_i,u_i) + \ell_{\mathrm{f}}(x_N) \\
\textrm{s.t.} &x_{i+1} = A_x x_i + B_u u_i, ~ x_0 = x,\\
&x_i \in X, ~u_i \in U ~\forall i, ~ x_N \in X_{\mathrm{f}},
\end{array}\right.
\end{equation}
where both the stage cost $\ell$ and the terminal cost
$\ell_{\mathrm{f}}$ are convex  functions (possibly nonsmooth). Note
that in our formulation we do not require strongly convex  costs.
Further, the terminal set $X_{\mathrm{f}}$ is chosen so that
stability of the closed-loop system is guaranteed.   We  assume the
sets $X, U$ and $X_{\mathrm{f}}$ to be compact, convex and simple
(by simple we understand that the projection on these sets can be
done easily, e.g. boxes).

Furthermore, we introduce the notation $z :=  \left[x_1^T \cdots
x_N^T \; u_0^T \cdots u_{N-1}^T \right]^T$, $Z := \prod_{i=1}^{N-1}
X \times X_{\mathrm{f}}\times \prod_{i=1}^N U$ and $f(z)  :=
\sum_{i=0}^{N-1} \ell(x_i,u_i) + \ell_{\mathrm{f}}(x_N)$. We can
also write compactly the linear dynamics $ x_{i+1} = A_x x_i + B_u
u_i$ for all $i = 0, \cdots, N-1$ and $x_0 = x$ as $Az = b(x)$ (see
\cite{ScoMay:99,WanBoy:10} for details). Note that $b(x)\in \rset^{N
n_x}$ depends linearly on $x$, i.e. $b(x) := \left[(A_x x)^T \ 0^T
\cdots 0^T \right]^T$. In these settings, for linear MPC we need to
solve, for a given initial state $x$, the primal convex optimization
problem:
\begin{equation}\label{eq:Px}
\min_z\set{f(z)~|~ Az= b(x), ~z \in Z}, \tag{$\textbf{P}(x)$}
\end{equation}
where $f$ is a convex function (possibly nonsmooth) and $A$ is a
matrix of appropriate dimension. Moreover, the set $Z$ is simple as
long as $X$, $X_{\mathrm{f}}$ and $U$ are simple sets. In the
following sections we  discuss how we can efficiently solve
optimization problem \eqref{eq:Px} approximately with dual gradient
methods based on inexact first order information and we  provide
tight estimates for the total number of iterations which has to be
performed in order to obtain a suboptimal solution in terms of
primal suboptimality and infeasibility.


\subsection{Augmented Lagrangian framework}\label{subsec_Aug}
Motivated by MPC problems, we are interested in solving convex
optimization problems of the form:
\begin{align*}\label{eq:primal_prob}
f^{*} := \left\{\begin{array}{cl}
\displaystyle\min_{z\in \rset^n} &f(z)\\
\textrm{s.t.} &Az = b, ~z \in Z,
\end{array}\right.\tag{$\textbf{P}$}
\end{align*}
where $f$ is convex function (possibly nonsmooth), $A \in \rset^{m
\times n}$ is  a full row-rank matrix and $Z$ is  a simple set (i.e.
the projection on this set is computationally cheap), compact and
convex. We will denote problem \eqref{eq:primal_prob}  as the primal
problem and $f$ as the primal objective function.

A common approach for solving problem \eqref{eq:primal_prob}
consists of applying  interior point methods, which usually perform
much lower number of iterations in practice than those predicted by
the theoretical worst case complexity analysis \cite{BoyVan:04}.  On
the other hand, for first order methods the number of iterations
predicted by the worst case complexity analysis is close to the
actual number of iterations performed by the method \cite{Nes:04}.
This is crucial in the context of fast embedded systems. First order
methods applied directly to problem \eqref{eq:primal_prob} imply
projection on the feasible set $\set{z ~|~ z \in Z, ~Az = b}$. Note
that even if $Z$ is a simple set, the projection on the feasible set
is hard due to the complicating constraints $Az = b$. An efficient
alternative is to move the complicating constraints into the cost
via Lagrange multipliers and solve the dual problem approximately by
using first order methods and then recover a primal suboptimal
solution for \eqref{eq:primal_prob}. This is the approach that we
follow in this paper: we derive inexact dual gradient methods that
allow us to generate approximate primal solutions for which we
provide  estimates for the violation of the constraints and upper
and lower bounds on the corresponding primal objective function
value of \eqref{eq:primal_prob}.

First let us define the dual function:
\begin{equation}\label{eq:dual_fun}
d(\lambda) := \min_{z \in Z} \mathcal{L}(z,\lambda),
\end{equation}
where $\mathcal{L}(z,\lambda) := f(z) +\iprod{\lambda, Az - b}$
represents the partial  Lagrangian  with respect to the constraints
$Az = b$ and $\lambda$ the associated Lagrange multipliers. Now, we
can write the corresponding dual problem as follows:
\begin{equation}\label{eq:dual_prob}
\max_{\lambda\in\rset^m} d(\lambda). \tag{$\textbf{D}$}
\end{equation}
We assume that Slater's constraint qualification holds, so that
problems \eqref{eq:primal_prob}  and \eqref{eq:dual_prob} have the
same optimal value. We also denote by $z^{*}$ an optimal solution of
\eqref{eq:primal_prob} and by $\lambda^{*}$ the corresponding
multiplier (i.e. an optimal solution of \eqref{eq:dual_prob}).

In general, the dual function $d$ is not differentiable
\cite{Ber:99} and therefore any subgradient  method for solving
\eqref{eq:dual_prob} has a slow convergence rate \cite{NedOzd:09}.
We will see in the sequel how we can avoid this drawback by means of
augmented Lagrangian framework.  We define the augmented Lagrangian
function~\cite{Hes:69}:
\begin{equation}\label{eq:aug_Lag_fun}
\mathcal{L}_{\rho}(z,\lambda) := f(z) + \iprod{\lambda, Az - b} +
\frac{\rho}{2}\norm{Az - b}^2,
\end{equation}
where $\rho > 0$ represents a penalty parameter. The augmented
dual problem, called also  the \emph{outer} problem, is defined
as:
\begin{equation}\label{eq:aug_dual_prob}
\max_{\lambda \in \rset^m} d_{\rho}(\lambda),
\tag{$\textbf{D}_{\rho}$}
\end{equation}
where $d_{\rho}(\lambda) := \displaystyle\min_{z \in Z}
\mathcal{L}_{\rho}(z,\lambda)$ and  we denote by $z^{*}(\lambda)$
an optimal solution of the \emph{inner} problem $\min_{z \in Z}
\mathcal{L}_{\rho}(z, \lambda)$ for a given $\lambda$. It is
well-known \cite{Ber:99,LanMon:08} that  the optimal value and the
set of optimal solutions of the dual problems \eqref{eq:dual_prob}
and \eqref{eq:aug_dual_prob} coincide.  Furthermore, the function
$d_{\rho}$ is concave and differentiable and its gradient is
\cite{Nes:04a}:
\begin{equation*}
\nabla{d}_{\rho}(\lambda) := Az^{*}(\lambda) - b.
\end{equation*}
Moreover, the gradient mapping $\nabla{d}_{\rho}(\cdot)$ is
Lipschitz continuous with  a Lipschitz constant \cite{Ber:99}
given by:
\begin{equation*}
L_{\mathrm{d}} := \rho^{-1}.
\end{equation*}
In conclusion, we want to solve within an accuracy $\eo$ the
equivalent  smooth outer problem \eqref{eq:aug_dual_prob} by using
first order methods with inexact gradients (e.g. dual gradient or
fast gradient algorithms) and then recover an approximate primal
solution. In other words, the goal of this paper is to generate a
primal-dual pair $(\hat{z}, \hat{\lambda})$, with $\hat{z} \in Z$,
for which we can ensure bounds on  dual suboptimality,  primal
infeasibility and primal suboptimality of order $\eo$, i.e.:
\begin{equation}
\label{eq:out1} f^{*} - d_{\rho}(\hat{\lambda}) \leq
\mathcal{O}\left(\eo\right), ~\norm{A \hat{z} - b} \leq
\mathcal{O}\left(\eo\right) ~ \textrm{and}~\abs{f(\hat{z}) -
f^{*}} \leq \mathcal{O}\left(\eo\right).
\end{equation}


\section{Complexity estimates of solving the inner problems}\label{sec_inner}
As we have seen in the previous section, in order to compute the
gradient $\nabla{d}_{\rho}$  we have to find, for a given $\lambda$,
an optimal solution of the \textit{inner} convex problem:
\begin{equation}\label{eq:inner_problem}
z^*(\lambda) \in \arg\min_{z \in Z} \mathcal{L}_{\rho} (z,\lambda).
\end{equation}
From the optimality conditions \cite{Roc:98}, we know that a point
$z^{*}(\lambda)$ is an optimal solution of
\eqref{eq:inner_problem} if and only if:
\begin{equation}
\label{eq:optimality_conditions}
\iprod{\nabla{\mathcal{L}}_{\rho}\left(z^{*}(\lambda),\lambda\right),
z-z^{*}(\lambda)} \geq 0 ~~~  \forall z \in Z.
\end{equation}
An equivalent way to characterize an optimal solution
$z^{*}(\lambda)$ of  \eqref{eq:inner_problem} can be given in terms
of the following inclusion:
\begin{equation}\label{eq:optimality_cone}
0 \in
\nabla{\mathcal{L}}_{\rho}\left(z^{*}(\lambda),\lambda)+\mathcal{N}_Z(z^{*}(\lambda)\right).
\end{equation}
Since an exact minimizer of the inner problem
\eqref{eq:inner_problem}  is usually hard to compute, we are
interested in finding an approximate solution of this problem
instead of its optimal one. Therefore, we have to consider an
inner accuracy $\varepsilon_{\mathrm{in}}$ which measures the
suboptimality of such an approximate solution for
\eqref{eq:inner_problem}:
\begin{equation*}
\bar{z}(\lambda) \approx \arg\min_{z \in Z}\set{ f(z) +
\iprod{\lambda, Az - b}  + \frac{\rho}{2}\norm{Az - b}^2}.
\end{equation*}
Since there exist several ways  to characterize an $\ei$-optimal
solution \cite{DevGli:11,LanMon:08,Roc:76}, we will further discuss
 different stopping criteria which can be used in order to
find such a solution. A well-known stopping criterion, which
measures the distance to optimal value of
\eqref{eq:inner_problem}, is given by:
\begin{equation}
\label{eq:criterion_a} \bar{z}(\lambda) \in Z, \;
~\mathcal{L}_{\rho}\left(\bar{z}(\lambda),  \lambda\right) -
\mathcal{L}_{\rho}\left(z^{*}(\lambda), \lambda\right) \leq
 \ei^2.
\end{equation}
The main advantage of using \eqref{eq:criterion_a} as a stopping
criterion for finding  $\bar{z}(\lambda)$ consists of the fact that
in the literature \cite{Nes:04} there exist explicit bounds on the
number of iterations which has to be performed by some well-known
first or second order methods to ensure the $\ei$-optimality.

Another stopping criterion, which measures the distance of
$\bar{z}(\lambda)$  to the set of optimal solution $Z^*(\lambda)$
of \eqref{eq:inner_problem} is given by:
\begin{equation}\label{eq:criterion_b}
\bar{z}(\lambda) \in Z, ~\mathrm{dist} \left(\bar{z}(\lambda),
Z^{*}(\lambda)\right) \leq \mathcal{O} \left( \ei \right).
\end{equation}
It is known  that this distance can be bounded by an easily
computable quantity when the objective function satisfies the
so-called gradient error bound \cite{Ber:99}. Thus, we can use this
bound  to define stopping rules in iterative algorithms for solving
the optimization problem. Note that gradient error bound assumption
is a generalization  of the more restrictive  notion of strong
convexity.

As a direct consequence of the optimality condition
\eqref{eq:optimality_conditions},  one can use the following
stopping criterion:
\begin{equation}
\label{eq:criterion_c} \bar{z}(\lambda) \in Z,
~~\iprod{\nabla{\mathcal{L}}_{\rho} (\bar{z}(\lambda),\lambda), z -
\bar{z}(\lambda)} \geq -\mathcal{O}(\ei) ~~~\forall z \in Z.
\end{equation}
Note that \eqref{eq:criterion_c} can be formulated using the support
function as:
\begin{equation*}
h_Z\left(-\nabla{\mathcal{L}}_{\rho}\left(\bar{z}(\lambda),\lambda\right)\right)
+
\iprod{\nabla{\mathcal{L}}_{\rho}(\bar{z}\left(\lambda),\lambda\right),\bar{z}(\lambda)}\leq
\mathcal{O}\left(\ei\right).
\end{equation*}
When the set $Z$ has a specific structure (e.g. a ball defined by
some norm), tight  upper bounds on the support function can be
computed explicitly and thus the stopping criterion can be
efficiently verified.

Based on optimality conditions \eqref{eq:optimality_cone}, the
following stopping criterion can also be used in order to
characterize an $\ei$-optimal solution $\bar{z}(\lambda)$ of the
inner problem \eqref{eq:inner_problem}:
\begin{equation}\label{eq:inner_criterion}
 \bar{z}(\lambda) \in Z, ~\mathrm{dist} \big(0,\nabla \mathcal{L}_{\rho}(\bar{z}(\lambda),\lambda)
 + \mathcal{N}_Z(\bar{z}(\lambda))\big) \leq \mathcal{O}(\ei).
\end{equation}
The main advantage of using this criterion is given by the fact that
the distance in \eqref{eq:inner_criterion} can be computed
efficiently for  sets $Z$ having a certain structure. Note that
\eqref{eq:inner_criterion} can be verified by solving the following
projection problem:
\begin{equation}\label{eq_distance}
s^{*} \in \arg\!\!\!\!\!\!\!\!\min_{s \in \mathcal{N}_Z\left(\bar{z}(\lambda)\right)}\norm{\nabla{\mathcal{L}}_{\rho}\left(\bar{z}(\lambda),\lambda\right) +
s}^2.
\end{equation}
To see how \eqref{eq_distance} can be solved efficiently we are first interested in finding an explicit characterization of the normal cone
$\mathcal{N}_Z(\bar{z}(\lambda))$, when the set $Z$ has a certain structure.

\begin{lemma}\label{le:special_case_inner_prob}{~}
Assume that the set $Z$ is a general polyhedral set, i.e. $Z :=
\set{z \in \rset^n~|~ C z \leq c}$, with $C \in \rset^{p \times
n}$ and $c \in \rset^p$. Then, problem \eqref{eq_distance} can be
recast as the following quadratic optimization problem:
\begin{equation}\label{eq_distance2}
\min_{\mu \geq 0}\norm{\nabla{\mathcal{L}}_{\rho}\left(\bar{z}(\lambda),\lambda\right) + \tilde{C}^T\mu}^2,
\end{equation}
where matrix $\tilde{C}$ contains the rows of $C$  corresponding to
the active constraints in $C \bar{z}(\lambda) \leq c$. In
particular, if $Z$ is a box in $\mathbb{R}^n$, then  problem
\eqref{eq_distance2} becomes separable and it can be solved
explicitly in $\mathcal{O}(\tilde{p})$ operations, where $\tilde{p}$
represents the number of active constraints in $C \bar{z}(\lambda)
\leq c$.
\end{lemma}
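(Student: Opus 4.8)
The plan is to characterize the normal cone of a polyhedral set using the KKT/active-set description, then substitute this characterization into the projection problem \eqref{eq_distance}.

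**First**, I would recall the explicit form of the normal cone to a polyhedron. For $Z = \set{z \in \rset^n \mid Cz \leq c}$ and a point $\bar z(\lambda) \in Z$, the normal cone $\mathcal{N}_Z(\bar z(\lambda))$ is generated by the gradients of the *active* constraints at $\bar z(\lambda)$. Writing the rows of $C$ corresponding to active constraints (those satisfied with equality in $C\bar z(\lambda) \leq c$) as the matrix $\tilde C$, the standard result from convex analysis states
\[
\mathcal{N}_Z(\bar z(\lambda)) = \set{\tilde C^T \mu \mid \mu \geq 0} = \mathrm{cone}\set{\tilde C^T}.
\]
This follows from the fact that the normal cone to an intersection of half-spaces at a point equals the conical hull of the outward normals of the half-spaces active at that point. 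This is precisely where I expect the one genuine subtlety to live: this formula requires that the active constraints behave well, i.e. one needs the active normals to generate the polar of the tangent cone without a constraint-qualification gap. For a polyhedral set no constraint qualification is needed beyond polyhedrality itself (the tangent cone is automatically the polyhedral cone $\set{d \mid \tilde C d \leq 0}$, and its polar is exactly $\mathrm{cone}\set{\tilde C^T}$ by the Farkas lemma / polarity of polyhedral cones), so this step is clean but should be justified by citing the polarity duality rather than asserted.

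**Next**, I would substitute this description directly into the projection problem. The distance computation \eqref{eq_distance} minimizes $\norm{\nabla \mathcal{L}_\rho(\bar z(\lambda), \lambda) + s}^2$ over $s \in \mathcal{N}_Z(\bar z(\lambda))$. Replacing the feasible set $s \in \mathcal{N}_Z(\bar z(\lambda))$ by the equivalent parametrization $s = \tilde C^T \mu$ with $\mu \geq 0$ immediately yields the nonnegatively constrained quadratic program \eqref{eq_distance2}. No further analysis is required for the reformulation itself; it is a pure change of variables, and the optimal $s^*$ is recovered as $\tilde C^T \mu^*$.

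**Finally**, for the box case I would exploit separability. When $Z$ is a box, each coordinate constraint is of the form $\alpha_j \leq z_j \leq \beta_j$, so each row of $C$ involves a single coordinate; the matrix $\tilde C$ then has one nonzero per active row, and the quadratic form $\norm{\nabla \mathcal{L}_\rho + \tilde C^T \mu}^2$ decouples across the coordinates. Each coordinate gives a one-dimensional, nonnegatively constrained scalar quadratic whose minimizer is obtained by a projection onto $[0,\infty)$ in closed form. Since at most one constraint is active per coordinate and there are $\tilde p$ active constraints in total, solving all scalar subproblems costs $\mathcal{O}(\tilde p)$ operations, which gives the claimed complexity. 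The only mild care here is to note that for a box no two opposing faces can be active simultaneously at a feasible point unless the box is degenerate, so the per-coordinate problem is well defined; this is the minor bookkeeping I would mention but not belabor.
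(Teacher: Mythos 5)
Your proposal is correct and follows essentially the same route as the paper: characterize $\mathcal{N}_Z(\bar{z}(\lambda))$ as the cone generated by the transposed active rows of $C$ (the paper cites Theorem 6.46 of Rockafellar--Wets where you invoke polarity/Farkas, but the content is identical), substitute the parametrization $s = \tilde{C}^T\mu$, $\mu \geq 0$, to obtain \eqref{eq_distance2}, and then exploit coordinate-wise decoupling in the box case (the paper phrases this as $\tilde{C}\tilde{C}^T = \mathrm{I}_{\tilde{p}}$, which is exactly your one-nonzero-per-active-row observation together with the non-degeneracy remark) to reduce to $\tilde{p}$ scalar projections onto the nonnegative orthant.
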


\begin{proof}
Let us recall that if $\bar{z}(\lambda) \in \mathrm{int} (Z)$, then
we have  $\mathcal{N}_Z\left(\bar{z}(\lambda)\right) = \set{0}$ and
therefore the distance $\mathrm{dist}\left(0,\nabla
\mathcal{L}_{\rho}\left(\bar{z}(\lambda),\lambda\right) +
\mathcal{N}_Z\left(\bar{z}(\lambda)\right)\right)$ will be equal to
$\norm{\nabla{\mathcal{L}}_{\rho}\left(\bar{z}(\lambda),\lambda\right)}$.
In the case of  $\bar{z}(\lambda) \in \mathrm{bd}(Z)$ there exists
an index set $\mathrm{I}\left(\bar{z}(\lambda)\right) \subseteq
\set{1,\cdots, p}$ such that $C_i  \bar{z}(\lambda) = c_i$ for all
$i \in \mathrm{I}(\bar{z}(\lambda))$, where $C_i$ and $c_i$
represent the $i$-th row and $i$-th element of $C$ and $c$,
respectively. Using now Theorem 6.46 in \cite{Roc:98} we have
$\mathcal{N}_Z\left(\bar{z}(\lambda)\right) =
\mathrm{cone}\set{C_i^T ,i \in
\mathrm{I}\left(\bar{z}(\lambda)\right)}$. Introducing the notation
$\tilde{C}$ for the matrix whose rows are $C_i$ for all $i \in
\mathrm{I}\left(\bar{z}(\lambda)\right)$,  we can write
\eqref{eq_distance} as \eqref{eq_distance2}. Note  that, in problem
\eqref{eq_distance2}, the dimension of the variable $\mu$ is
$\tilde{p} = \abs{\mathrm{I}\left(\bar{z}(\lambda)\right)}$ (i.e.
the number of active constraints) which usually is smaller in
comparison with the dimension $n$ of  problem \eqref{eq_distance}.

Now, if we assume that $Z$ is a box in $\rset^n$, then problem
\eqref{eq_distance2} can be written in the following equivalent
form:
\begin{equation*}
\min_{\mu \geq 0} \frac{1}{2}\mu^T \tilde{C} \tilde{C}^T \mu +
\nabla \mathcal{L}_{\rho}^T\left(\bar{z}(\lambda),\lambda\right)\tilde{C}^T \mu.
\end{equation*}
Since for box constraints we have $\tilde{C} \tilde{C}^T =
\mathrm{I}_{\tilde{p}}$, i.e. the identity matrix, the previous
optimization problem can be decomposed into $\tilde{p}$ scalar
projection problems onto the nonnegative orthant and thus in order
to compute the optimal solution of \eqref{eq_distance2} we only have
to perform $\mathcal{O}(\tilde{p})$ comparisons.
\end{proof}

The next lemma establishes some relations between  stopping criteria
\eqref{eq:criterion_a}-\eqref{eq:inner_criterion}.
\begin{lemma}\label{le:inexact_criterions}
The conditions \eqref{eq:criterion_a}, \eqref{eq:criterion_b},
\eqref{eq:criterion_c} and \eqref{eq:inner_criterion} satisfy the
following:
\begin{itemize}
\item[\emph{(i)}] Let $\nabla{\mathcal{L}}_\rho$ be Lipschitz
continuous with a Lipschitz constant $L_{\mathrm{p}}$. Then
\begin{equation*}
\eqref{eq:criterion_a} ~\Rightarrow ~\eqref{eq:criterion_c},
~\eqref{eq:criterion_b} ~\Rightarrow ~\eqref{eq:criterion_c},
~\eqref{eq:inner_criterion} ~\Rightarrow ~\eqref{eq:criterion_c}.
\end{equation*}
\item[\emph{(ii)}] If, in addition, $\mathcal{L}_{\rho}$ is strongly convex with a convexity parameter $\sigma_{\text{p}} >0 $, then
\begin{equation*}
\eqref{eq:inner_criterion} ~\Rightarrow
~\eqref{eq:criterion_a}~\Rightarrow~\eqref{eq:criterion_b}.
\end{equation*}
\end{itemize}
\end{lemma}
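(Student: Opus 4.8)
The plan is to treat \eqref{eq:criterion_c} as the weakest of the four conditions, so that part (i) becomes three separate implications into it, and then to use strong convexity in part (ii) to climb back up the chain. Two structural facts are used throughout: the set $Z$ is compact with diameter $R_{\text{p}} = \max_{z,y\in Z}\norm{z-y}$, and $\nabla\mathcal{L}_\rho(\cdot,\lambda)$ is continuous, hence bounded on $Z$, say $\norm{\nabla\mathcal{L}_\rho(z,\lambda)}\le M$ for all $z\in Z$. I would first dispatch the easiest implication, \eqref{eq:inner_criterion}$\Rightarrow$\eqref{eq:criterion_c}: by definition of the distance there is $s\in\mathcal{N}_Z(\bar z(\lambda))$ with $\norm{\nabla\mathcal{L}_\rho(\bar z(\lambda),\lambda)+s}\le\mathcal{O}(\ei)$, and writing $\iprod{\nabla\mathcal{L}_\rho(\bar z(\lambda),\lambda),z-\bar z(\lambda)}=\iprod{\nabla\mathcal{L}_\rho(\bar z(\lambda),\lambda)+s,z-\bar z(\lambda)}-\iprod{s,z-\bar z(\lambda)}$, the second term is nonnegative by the normal-cone inequality while the first is bounded below by $-\mathcal{O}(\ei)R_{\text{p}}$ via Cauchy--Schwarz. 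For \eqref{eq:criterion_b}$\Rightarrow$\eqref{eq:criterion_c} I would pick $z^{*}(\lambda)\in Z^{*}(\lambda)$ realizing the distance, split the same inner product by inserting $\nabla\mathcal{L}_\rho(z^{*}(\lambda),\lambda)$, control the gradient difference by $\Lp\norm{\bar z(\lambda)-z^{*}(\lambda)}R_{\text{p}}=\mathcal{O}(\ei)$ through the Lipschitz hypothesis, use the exact optimality inequality \eqref{eq:optimality_conditions} at $z^{*}(\lambda)$ for the principal term, and absorb the residual $\iprod{\nabla\mathcal{L}_\rho(z^{*}(\lambda),\lambda),z^{*}(\lambda)-\bar z(\lambda)}$ into $M\cdot\mathcal{O}(\ei)$.

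The subtle implication is \eqref{eq:criterion_a}$\Rightarrow$\eqref{eq:criterion_c}, since here one must pass from a function-value gap of order $\ei^2$ to a first-order gap of order $\ei$. The plan is to fix $z\in Z$, move along the feasible segment $z_\alpha:=\bar z(\lambda)+\alpha(z-\bar z(\lambda))\in Z$ for $\alpha\in[0,1]$, and apply the descent lemma for the $\Lp$-smooth function $\mathcal{L}_\rho$; combining with $\mathcal{L}_\rho(\bar z(\lambda),\lambda)-\mathcal{L}_\rho(z^{*}(\lambda),\lambda)\le\ei^2$ and $\mathcal{L}_\rho(z^{*}(\lambda),\lambda)\le\mathcal{L}_\rho(z_\alpha,\lambda)$ yields $\iprod{\nabla\mathcal{L}_\rho(\bar z(\lambda),\lambda),z-\bar z(\lambda)}\ge -\ei^2/\alpha-\tfrac{\Lp\alpha}{2}R_{\text{p}}^2$. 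Optimizing the free parameter, which balances the two terms at $\alpha\sim\ei/(R_{\text{p}}\sqrt{\Lp})$, produces the bound $-\mathcal{O}(\ei)$; one only needs $\ei$ small enough that the minimizing $\alpha$ lies in $[0,1]$, which is exactly the regime of interest.

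For part (ii) I would invoke strong convexity in two ways. For \eqref{eq:criterion_a}$\Rightarrow$\eqref{eq:criterion_b}, strong convexity together with the optimality inequality at $z^{*}(\lambda)$ gives the quadratic growth $\tfrac{\sigma_{\text{p}}}{2}\norm{\bar z(\lambda)-z^{*}(\lambda)}^2\le\mathcal{L}_\rho(\bar z(\lambda),\lambda)-\mathcal{L}_\rho(z^{*}(\lambda),\lambda)\le\ei^2$, so $\norm{\bar z(\lambda)-z^{*}(\lambda)}\le\sqrt{2/\sigma_{\text{p}}}\,\ei=\mathcal{O}(\ei)$; since the minimizer is unique under strong convexity, this quantity is precisely the distance to $Z^{*}(\lambda)$. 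For \eqref{eq:inner_criterion}$\Rightarrow$\eqref{eq:criterion_a}, I would write the strong-convexity lower bound anchored at $\bar z(\lambda)$ and evaluated at $z^{*}(\lambda)$ to get $\mathcal{L}_\rho(\bar z(\lambda),\lambda)-\mathcal{L}_\rho(z^{*}(\lambda),\lambda)\le\iprod{\nabla\mathcal{L}_\rho(\bar z(\lambda),\lambda),\bar z(\lambda)-z^{*}(\lambda)}-\tfrac{\sigma_{\text{p}}}{2}\norm{\bar z(\lambda)-z^{*}(\lambda)}^2$, bound the inner product by $\mathcal{O}(\ei)\norm{\bar z(\lambda)-z^{*}(\lambda)}$ exactly as in the \eqref{eq:inner_criterion}$\Rightarrow$\eqref{eq:criterion_c} step (the normal-cone term again carrying the favorable sign), and then complete the square in $t:=\norm{\bar z(\lambda)-z^{*}(\lambda)}$ to obtain a bound of order $\ei^2$.

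The main obstacle I anticipate is the \eqref{eq:criterion_a}$\Rightarrow$\eqref{eq:criterion_c} step: the natural convexity inequality points the wrong way, and recovering an $\mathcal{O}(\ei)$ first-order gap from an $\mathcal{O}(\ei^2)$ value gap genuinely requires both the smoothness constant $\Lp$ and the compactness of $Z$, fed through the segment and step-size argument. The completing-the-square bookkeeping in \eqref{eq:inner_criterion}$\Rightarrow$\eqref{eq:criterion_a} is the only other place where care is needed to keep the powers of $\ei$ consistently matched.
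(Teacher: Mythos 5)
Your proposal is correct, and three of the five implications ((\ref{eq:criterion_b})$\Rightarrow$(\ref{eq:criterion_c}), (\ref{eq:inner_criterion})$\Rightarrow$(\ref{eq:criterion_c}), and (\ref{eq:criterion_a})$\Rightarrow$(\ref{eq:criterion_b})) follow essentially the paper's own argument: inserting $\nabla\mathcal{L}_\rho(z^*(\lambda),\lambda)$ and using Lipschitz continuity plus exact optimality, exploiting the favorable sign of the normal-cone term together with compactness of $Z$, and quadratic growth from strong convexity, respectively. Where you genuinely depart from the paper is the implication (\ref{eq:criterion_a})$\Rightarrow$(\ref{eq:criterion_c}): the paper does not prove it at all but simply cites Section 3 of the inexact-oracle reference (Devolder--Glineur--Nesterov) for the bound $\ei^2+\sqrt{2L_{\mathrm{p}}}R_{\mathrm{p}}\ei$, whereas you give a self-contained line-segment argument --- descent lemma along $z_\alpha=\bar z(\lambda)+\alpha(z-\bar z(\lambda))$, then optimization of $\alpha\sim\ei/(R_{\mathrm{p}}\sqrt{L_{\mathrm{p}}})$ --- which recovers the same $\sqrt{2L_{\mathrm{p}}}R_{\mathrm{p}}\ei$ bound in the small-$\ei$ regime (your caveat that the optimal $\alpha$ must lie in $[0,1]$ is exactly the right one, and is harmless since the criterion is stated with an $\mathcal{O}(\ei)$). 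Your treatment of (\ref{eq:inner_criterion})$\Rightarrow$(\ref{eq:criterion_a}) is also a mild variant: the paper bounds the value gap $\Delta$ via convexity and quadratic growth to get $\Delta\le\ei\sqrt{2\Delta/\sigma_{\mathrm{p}}}$ and solves for $\Delta$, while you anchor the strong-convexity inequality at $\bar z(\lambda)$ and complete the square in $t=\norm{\bar z(\lambda)-z^*(\lambda)}$; both yield $\Delta\le\mathcal{O}(\ei^2/\sigma_{\mathrm{p}})$ from the same two ingredients (normal-cone sign and strong convexity), so this is a matter of bookkeeping rather than substance. The net effect is that your write-up is more self-contained than the paper's, at the cost of the explicit constant being valid only for $\ei$ small, which is the regime the lemma is used in anyway.
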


\begin{proof}
\emph{(i)}
\eqref{eq:criterion_a}~$\Rightarrow$~\eqref{eq:criterion_c}: In
Section 3 of \cite{DevGli:11} the authors show that if
\eqref{eq:criterion_a} holds, then \eqref{eq:criterion_c} also
holds with
$\mathcal{O}(\ei)=\ei^2+\sqrt{2L_{\mathrm{p}}}R_\mathrm{p}\ei$.

\eqref{eq:criterion_b}~$\Rightarrow$~\eqref{eq:criterion_c}: We
can write:
\begin{align*}
&\iprod{\nabla{\mathcal{L}}_{\rho}\right(\bar{z}(\lambda),\lambda\left),
z-\bar{z}(\lambda)} \\
&=\! \iprod{\nabla{\mathcal{L}}_{\rho} \!
\right(\bar{z}(\lambda),\lambda\left)
 - \nabla{\mathcal{L}}_{\rho} \! \left(z^{*}(\lambda),\lambda\right), z
\!-\! \bar{z}(\lambda)} \!+\! \iprod{\nabla{\mathcal{L}}_{\rho} \!
\left(z^{*}(\lambda),\lambda\right), z \!-\! z^{*}
(\lambda) \!+\! z^{*}(\lambda) - \bar{z}(\lambda)} \\
&\geq -\left( L_{\text{p}}R_{\text{p}} + \norm{\nabla{\mathcal{L}}_{\rho}\left(z^{*}(\lambda),\lambda\right)} \right) \ei.
\end{align*}
Since $Z$ is compact and $\nabla{\mathcal{L}}_{\rho}(\cdot,\lambda)$
is continuous, then $\nabla{\mathcal{L}}_{\rho}(\cdot,\lambda)$ is
bounded. Hence,  our statement follows from the last inequality.
\newline
\eqref{eq:inner_criterion}~$\Rightarrow$~\eqref{eq:criterion_c}: The
condition \eqref{eq:inner_criterion} can be written as
$\iprod{\nabla{\mathcal{L}}_{\rho}\right(\bar{z}(\lambda),\lambda\left),
z -\bar{z}(\lambda)} \geq - \ei \iprod{e, z -\bar{z}(\lambda)}$ for
all $e$ such that $\norm{e}\leq 1$ and $z\in Z$. If $Z$ is bounded,
then the last inequality implies
$\iprod{\nabla{\mathcal{L}}_{\rho}\right(\bar{z}(\lambda),\lambda\left),
z -\bar{z}(\lambda)} \geq - R_{\text{p}} \ei$.
\newline
\emph{(ii)}
\eqref{eq:inner_criterion}~$\Rightarrow$~\eqref{eq:criterion_a}:
Since $\mathcal{L}_{\rho}(z,\lambda)$ is strongly convex and
$z^{*}(\lambda)$ is its minimizer over $Z$,   we have:
\begin{equation*}
0 \geq \mathcal{L}_{\rho}\left(\bar{z}(\lambda),\lambda\right)-
\mathcal{L}_{\rho}\left(z^{*}(\lambda),\lambda\right) \geq
\frac{\sigma_{\mathrm{p}}}{2}\|\bar{z}(\lambda)-z^{*}(\lambda)\|^2.
\end{equation*}
From the convexity of $\mathcal{L}_{\rho}(z,\lambda)$ we can write:
\begin{align*}
& \mathcal{L}_{\rho}\!\left(\bar{z}(\lambda),\lambda\right) -
\mathcal{L}_{\rho}\left(z^{*}(\lambda),\lambda\right) \leq \iprod{
\nabla{\mathcal{L}}_{\rho} \left(\bar{z}(\lambda), \lambda\right),
\bar{z}(\lambda)  - z^{*}(\lambda)}\\
&\leq \iprod{\nabla{\mathcal{L}}_{\rho}\left(\bar{z}(\lambda),
\lambda\right)+s^{*}, \bar{z}(\lambda)-z^{*}(\lambda)} \leq
\norm{\nabla \mathcal{L}_{\rho}\left(\bar{z}(\lambda),
\lambda\right)+s^{*}}\norm{\bar{z}(\lambda)-z^{*}(\lambda)}\\
&\leq \ei \left[\frac{2}{\sigma_{\mathrm{p}}}\left(
\mathcal{L}_{\rho}\left(\bar{z}(\lambda),\lambda\right)-
\!\mathcal{L}_{\rho}\left(z^{*}(\lambda),\lambda\right)\right)\right]^{1/2},
\end{align*}
which implies \eqref{eq:criterion_a}.
\newline
\eqref{eq:criterion_a}~$\Rightarrow$~\eqref{eq:criterion_b}:  Taking
into account that $\mathcal{L}_{\rho}(\cdot, \lambda)$  is strongly
convex we have
$\frac{\sigma_{\mathrm{p}}}{2}\norm{z^{*}(\lambda)-\bar{z}(\lambda)}
\leq \mathcal{L}_{\rho}\left(\bar{z}(\lambda),\lambda\right) -
\mathcal{L}_{\rho}\left(z^{*}(\lambda),\lambda\right) \leq \ei^2$.
This leads to $\norm{\bar{z}(\lambda) - z^{*}(\lambda)}\leq
(2/\sigma_{\text{p}})^{1/2}\ei$. The lemma is proved.
\end{proof}

The next theorem provides estimates on the number of iterations that
are required by  fast gradient schemes to obtain an $\ei$
approximate solution for  inner problem \eqref{eq:inner_problem}.

\begin{theorem}\cite{Nes:04}\label{th:inner_complexity}
Assume that function $\mathcal{L}_{\rho}(\cdot,\lambda)$ has
Lipschitz continuous gradient  w.r.t. variable $z$,  with a
Lipschitz constant $L_{\text{p}}$ and a fast gradient scheme
\cite{Nes:04} is applied for finding an $\ei$ approximate solution
$\bar{z}(\lambda)$ of \eqref{eq:inner_problem} such that stopping
criterion \eqref{eq:criterion_a} holds, i.e. $\mathcal{L}_{\rho}
\left( \bar{z}(\lambda),\lambda \right)-\mathcal{L}_{\rho}
\left(z^*(\lambda),\lambda \right) \leq \ei^2$. Then, the complexity
of finding $\bar{z}(\lambda)$ is  $\mathcal{O} \left(
\sqrt{\frac{L_p}{\ei^2}} \right)$ iterations. If, in addition
$\mathcal{L}_{\rho}(\cdot,\cdot)$ is strongly convex with a
convexity parameter $\sigma_{\text{p}} > 0$, then $\bar{z}(\lambda)$
can be computed in at most
$\mathcal{O}\left(\sqrt{\frac{L_\text{p}}{\sigma_{\text{p}}}} \ln
\left(\frac{\sigma_{\text{p}}}{\ei^2} \right) \right)$ iterations by
using a fast gradient scheme.
\end{theorem}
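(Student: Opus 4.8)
The plan is to specialize the standard convergence guarantees of Nesterov's fast gradient scheme \cite{Nes:04} to the inner problem \eqref{eq:inner_problem} and then invert the resulting rate to read off the iteration count. First I would recall that, for a convex function with $L_{\text{p}}$-Lipschitz gradient minimized over the compact convex set $Z$, the fast gradient method generates iterates $z_k$ satisfying a bound of the form
\begin{equation*}
\mathcal{L}_{\rho}(z_k,\lambda) - \mathcal{L}_{\rho}(z^*(\lambda),\lambda) \leq \frac{2 L_{\text{p}} \norm{z_0 - z^*(\lambda)}^2}{(k+1)^2} \leq \frac{2 L_{\text{p}} R_{\text{p}}^2}{(k+1)^2},
\end{equation*}
where the last inequality uses that $\norm{z_0 - z^*(\lambda)} \leq R_{\text{p}}$, the diameter of $Z$.

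To obtain the first complexity estimate, I would simply require the right-hand side to be at most $\ei^2$, i.e.\ impose $2 L_{\text{p}} R_{\text{p}}^2/(k+1)^2 \leq \ei^2$. Solving for $k$ gives $k + 1 \geq R_{\text{p}} \sqrt{2 L_{\text{p}}/\ei^2}$, so that criterion \eqref{eq:criterion_a} is guaranteed after $\mathcal{O}(\sqrt{L_{\text{p}}/\ei^2})$ iterations, with the diameter $R_{\text{p}}$ absorbed into the constant.

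For the strongly convex case I would instead invoke the linearly convergent variant of the fast gradient scheme, whose rate carries the contraction factor $(1 - \sqrt{\sigma_{\text{p}}/L_{\text{p}}})$: after $k$ iterations one has
\begin{equation*}
\mathcal{L}_{\rho}(z_k,\lambda) - \mathcal{L}_{\rho}(z^*(\lambda),\lambda) \leq \left(1 - \sqrt{\sigma_{\text{p}}/L_{\text{p}}}\right)^k C_0,
\end{equation*}
where $C_0$ bounds the initial gap. Forcing the right-hand side below $\ei^2$, taking logarithms, and using the elementary estimate $-\ln(1 - x) \geq x$ for $x \in (0,1)$ with $x = \sqrt{\sigma_{\text{p}}/L_{\text{p}}}$ would yield $k \geq \sqrt{L_{\text{p}}/\sigma_{\text{p}}} \, \ln(C_0/\ei^2)$, i.e.\ the announced $\mathcal{O}(\sqrt{L_{\text{p}}/\sigma_{\text{p}}} \ln(\sigma_{\text{p}}/\ei^2))$ bound.

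The main obstacle I anticipate is pinning down the constant $C_0$ inside the logarithm so that it matches the stated $\sigma_{\text{p}}/\ei^2$. This requires bounding the initial suboptimality gap $\mathcal{L}_{\rho}(z_0,\lambda) - \mathcal{L}_{\rho}(z^*(\lambda),\lambda)$ in terms of the problem data; using strong convexity together with the Lipschitz gradient bound relates this gap to $\sigma_{\text{p}}$ (and the diameter $R_{\text{p}}$, again absorbed into the constant), which is what lets the $\mathcal{O}(\cdot)$ notation hide everything except the essential dependence on $\sigma_{\text{p}}$ and $\ei$. Everything else is a direct quotation of the rates established in \cite{Nes:04}.
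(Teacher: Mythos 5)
The paper offers no proof of this theorem at all---it is quoted directly from Nesterov \cite{Nes:04}---and your reconstruction (specializing the $\mathcal{O}\left(L_{\mathrm{p}}R_{\mathrm{p}}^2/k^2\right)$ rate in the convex case and the $\left(1-\sqrt{\sigma_{\mathrm{p}}/L_{\mathrm{p}}}\right)^k$ linear rate in the strongly convex case, then inverting) is precisely the standard argument the citation stands for, so your approach matches the paper's. Your worry about the constant $C_0$ inside the logarithm is immaterial: under the $\mathcal{O}(\cdot)$ convention it is absorbed, and the paper's own later instantiation \eqref{eq_kingrad} indeed places $\sqrt{L_{\mathrm{p}}}R_{\mathrm{p}}$-dependent factors inside the logarithm, exactly as your bound $\ln\left(C_0/\ei^2\right)$ would, rather than the literal $\sigma_{\mathrm{p}}/\ei^2$ appearing in the theorem statement.
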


Note that if the function $f$ is nonsmooth, we have a complexity
$\mathcal{O} \left( \frac{1}{\left(\ei^2\right)^2} \right)$
iterations with a projected subgradient method or an improved
$\mathcal{O} \left(\frac{1}{\ei^2} \right)$ by using  smoothing
techniques \cite{Nes:04a}, provided that $f$ has a certain
structure.


\section{Complexity estimates of solving the outer problem using approximate dual gradients}
\label{sec_outer}
In this section we solve the augmented Lagrangian dual problem
\eqref{eq:aug_dual_prob} approximately by using dual gradient and
fast gradient methods with inexact information and derive
computational complexity certificates for these methods. Since we
solve the inner problem inexactly, we have to use inexact gradients
and approximate values of the augmented dual function $d_{\rho}$
defined in terms of $\bar{z}(\lambda)$, i.e. we introduce the
following pair:
\begin{equation*}
\bar{d}_{\rho}(\lambda):= \mathcal{L}_{\rho}\left(\bar{z}(\lambda),\lambda\right) ~~\textrm{and}~~\nabla{\bar{d}}_{\rho}(\lambda) := A\bar{z}(\lambda) - b.
\end{equation*}
The next  theorem, which is similar to the results in
\cite{DevGli:11}, provides  bounds on the dual function when the
inner problem \eqref{eq:inner_problem} is solved approximately. For
completeness we  give the proof.

\begin{theorem}\label{th:cond_echiv}
If $\bar{z}(\lambda)$ is computed such that the stopping criterion
\eqref{eq:criterion_c} is satisfied, i.e. $\bar{z}(\lambda) \in Z$
 and $\min_{z \in Z}
\iprod{\nabla{\mathcal{L}}_{\rho}(\bar{z}(\lambda), \lambda), z -
\bar{z}(\lambda)} \geq
-\left(1+\sqrt{2L_{\mathrm{p}}}R_{\mathrm{p}}\right) \ei$, then
the following inequalities hold:
\begin{eqnarray}\label{ineq_approx}
&&\bar{d}_{\rho}(\lambda) +  \iprod{\nabla {\bar{d}_{\rho}}(\lambda),  \mu  -\lambda}  - \frac{L_{\mathrm{d}}}{2}\norm{\mu - \lambda}^2 - \left(
1 + \sqrt{2L_{\mathrm{p}}}R_{\text{p}}\right)\ei \leq d_{\rho}(\mu) \nonumber\\
[-1.5ex]\\[-1.5ex]
&& \leq \bar{d}_{\rho}(\lambda) +
\iprod{\nabla{\bar{d}_{\rho}}(\lambda), \mu  - \lambda} ~~~ \forall
\lambda, \mu \in \rset^m.\nonumber
\end{eqnarray}
\end{theorem}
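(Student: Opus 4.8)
The plan is to establish the two inequalities separately, exploiting that $\mathcal{L}_{\rho}(z,\mu)$ depends on $\mu$ only through the linear term $\iprod{\mu,Az-b}$. Concretely, for every $z$ one has the identity $\mathcal{L}_{\rho}(z,\mu)=\mathcal{L}_{\rho}(z,\lambda)+\iprod{\mu-\lambda,\,Az-b}$; evaluating at $z=\bar{z}(\lambda)$ and recalling $\bar{d}_{\rho}(\lambda)=\mathcal{L}_{\rho}(\bar{z}(\lambda),\lambda)$ and $\nabla\bar{d}_{\rho}(\lambda)=A\bar{z}(\lambda)-b$ gives
\[
\mathcal{L}_{\rho}(\bar{z}(\lambda),\mu)=\bar{d}_{\rho}(\lambda)+\iprod{\nabla\bar{d}_{\rho}(\lambda),\,\mu-\lambda}.
\]
The upper bound is then immediate: since $\bar{z}(\lambda)\in Z$, we have $d_{\rho}(\mu)=\min_{z\in Z}\mathcal{L}_{\rho}(z,\mu)\le\mathcal{L}_{\rho}(\bar{z}(\lambda),\mu)$, which is exactly the right-hand inequality. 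This half uses neither the stopping criterion nor the Lipschitz constant.

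For the lower bound, the first key step is to \emph{not} discard the augmenting term $\frac{\rho}{2}\norm{Az-b}^2$. I would lower bound $\mathcal{L}_{\rho}(z,\mu)$ for an arbitrary $z\in Z$ as follows. Writing $g:=\nabla\mathcal{L}_{\rho}(\bar{z}(\lambda),\lambda)$, the corresponding (sub)gradient of $f$ at $\bar{z}(\lambda)$ is $g-A^{T}\lambda-\rho A^{T}(A\bar{z}(\lambda)-b)$, since the remaining part of $\mathcal{L}_{\rho}(\cdot,\lambda)$ is smooth. Convexity of $f$ with this identification yields a linear underestimate of $f(z)$; substituting it into $\mathcal{L}_{\rho}(z,\mu)=f(z)+\iprod{\mu,Az-b}+\frac{\rho}{2}\norm{Az-b}^{2}$ and invoking the hypothesis $\iprod{g,\,z-\bar{z}(\lambda)}\ge-(1+\sqrt{2L_{\mathrm{p}}}R_{\mathrm{p}})\ei$ produces a bound whose only $z$-dependence enters through $w:=Az-b$, in the quadratic form $\frac{\rho}{2}\norm{w}^{2}+\iprod{\mu-\lambda-\rho(A\bar{z}(\lambda)-b),\,w}$ plus $z$-independent constants.

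The second key step is to minimize over $z\in Z$ by instead minimizing this quadratic over all $w\in\rset^{m}$, which only decreases the value and hence preserves the inequality. The unconstrained minimum is $-\tfrac{1}{2\rho}\norm{\mu-\lambda-\rho(A\bar{z}(\lambda)-b)}^{2}$; expanding the square, the cross term recombines with the surviving constants into $\mathcal{L}_{\rho}(\bar{z}(\lambda),\mu)$, while the pure term becomes $-\tfrac{1}{2\rho}\norm{\mu-\lambda}^{2}=-\tfrac{L_{\mathrm{d}}}{2}\norm{\mu-\lambda}^{2}$ because $L_{\mathrm{d}}=\rho^{-1}$. Combined with the displayed identity for $\mathcal{L}_{\rho}(\bar{z}(\lambda),\mu)$, this delivers exactly the left-hand inequality with error term $(1+\sqrt{2L_{\mathrm{p}}}R_{\mathrm{p}})\ei$.

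The main obstacle, and the reason a naive argument fails, is precisely this treatment of the quadratic term. If one instead used concavity and Lipschitzness of $d_{\rho}$ and then tried to replace the exact pair $(d_{\rho}(\lambda),\nabla d_{\rho}(\lambda))$ by $(\bar{d}_{\rho}(\lambda),\nabla\bar{d}_{\rho}(\lambda))$, the gradient error $\iprod{A(\bar{z}(\lambda)-z^{*}(\lambda)),\,\mu-\lambda}$ is only linear in $\norm{\mu-\lambda}$ and would require a bound on $\norm{\bar{z}(\lambda)-z^{*}(\lambda)}$, i.e.\ strong convexity, which is not assumed here. Retaining $\frac{\rho}{2}\norm{Az-b}^{2}$ and completing the square in $w=Az-b$ is what generates the $\tfrac{L_{\mathrm{d}}}{2}\norm{\mu-\lambda}^{2}$ term directly; the only care needed is the standard subgradient bookkeeping when $f$ is nonsmooth.
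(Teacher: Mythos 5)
Your proposal is correct and follows essentially the same route as the paper's proof: the upper bound directly from $d_{\rho}(\mu)\le\mathcal{L}_{\rho}(\bar{z}(\lambda),\mu)$, and the lower bound by linearizing $f$ at $\bar{z}(\lambda)$ via convexity, expressing its (sub)gradient through $\nabla\mathcal{L}_{\rho}(\bar{z}(\lambda),\lambda)$ so the stopping criterion absorbs the $\mathcal{O}(\ei)$ error, then relaxing the minimization to all of $\rset^m$ and completing the square to produce $-\frac{L_{\mathrm{d}}}{2}\norm{\mu-\lambda}^2$. The only differences are cosmetic (you complete the square in $w=Az-b$ rather than $A(z-\bar{z}(\lambda))$, and bound the criterion term pointwise instead of splitting the minimum), so the two arguments coincide.
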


\begin{proof}
For simplicity we introduce the  notation
$C_Z:=1+\sqrt{2\Lp}R_{\text{p}}$. The right-hand side inequality
follows directly from the definitions of $d_{\rho}$ and
$\bar{d}_\rho$. We only have to prove the left-hand side
inequality. Following the derivations from Section 3.3 in
\cite{DevGli:11} we have:
\begin{align*}
d_{\rho}(\mu) & \geq \min_{z \in Z}\Big\{ f(\bar{z}(\lambda))+
\iprod{\nabla{f}(\bar{z}(\lambda)), z\!-\!\bar{z}(\lambda)} +
\iprod{\mu, A z - b} +
\frac{\rho}{2}\norm{Az - b}^2\Big\} \nonumber\\
&\geq \min_{z \in z}\! \Big\{ f(\bar{z}(\lambda)) \!-\!
\iprod{A^T\lambda \!+\!\rho A^T\!(A\bar{z}(\lambda)\!-\!b),
z\!-\!\bar{z}(\lambda)}  + \iprod{\mu, A z-b} \!+\!
\frac{\rho}{2}\norm{Az-b}^2\Big\} \nonumber\\
& ~~~+ \min_{z \in Z}
\iprod{\nabla{\mathcal{L}}_{\rho}\left(\bar{z}(\lambda),\lambda\right),z-\bar{z}(\lambda)},
\end{align*}
where we use the convexity of $f$ and the properties of  minimum in
the first and the second inequality, respectively. Using now the
assumptions of the theorem and the definition of $\bar{d}_{\rho}$ we
obtain:
\begin{align*}
& d_{\rho}(\mu)\\
&\geq\! \min_{z \in Z}\!\Big\{ \!f(\bar{z}(\lambda)) \!\!-\!\!
\iprod{A^T\lambda \!+\! \rho A^T\! (A \bar{z}(\lambda) \!-\! b), z
\!-\! \bar{z}(\lambda)}
 \!+\! \iprod{\mu, A z \!-\! b} \!+\! \frac{\rho}{2}\norm{A z\!-\!b}^2\!\Big\}\!-\!C_Z \ei \\
& = \min_{z \in Z}\Big\{\!\iprod{A(z\!-\!\bar{z}(\lambda)), \mu
\!-\! \lambda} +
\frac{\rho}{2}\norm{A(z\!-\!\bar{z}(\lambda))}^2\!\Big\} \!+\!
\bar{d_{\rho}}(\lambda) \!+\! \iprod{\nabla{\bar{d}}_{\rho}(\lambda), \mu \!-\!\lambda} \!-\! C_Z \ei \\
&\geq \!\min_{z \in
\rset^n}\Big\{\!\iprod{A(z\!-\!\bar{z}(\lambda)), \mu \!-\!
\lambda} \!+\!
\frac{\rho}{2}\norm{A(z\!-\!\bar{z}(\lambda))}^2\!\Big\}\!+\!
\bar{d_{\rho}}(\lambda) \!+\!
\iprod{\nabla{\bar{d}}_{\rho}(\lambda), \!\mu \!-\!\lambda} \!-\!
C_Z \ei.
\end{align*}
By taking into account that $\min_{\xi  \in \rset^n}
{\frac{1}{2}\norm{\xi}^2 + g^T\xi}=-\frac{1}{2}\norm{g}^2$ and using
the definition of $C_Z$, we obtain from the last expression the
right-hand side inequality.
\end{proof}

Note that the first inequality helps us construct a quadratic model
which bounds from below the function $d_{\rho}$, when the exact
values of the dual function and its gradients are unknown. The
second inequality can be  viewed as an approximation of the
concavity condition on $d_{\rho}$.   The two models, the linear and
the quadratic one,  use only approximate function values and
approximate gradients evaluated at certain points. A more general
framework for inexact gradient methods can be found in
\cite{DevGli:11}.

It can be easily  proved that under the assumptions of Theorem
\ref{th:cond_echiv}, the following relation holds between the true
and approximate gradient:
\begin{equation*}
\norm{\nabla{\bar{d}}_{\rho}(\lambda) - \nabla{d}_{\rho}(\lambda)}
\leq \sqrt{2L_{\mathrm{d}} \left(1+\sqrt{2\Lp}R_{\text{p}}\right)
\ei} ~~~\forall \lambda \in \rset^m.
\end{equation*}

\subsection{Inexact dual gradient method}\label{subsec_dual_gradient}
In this section we provide the convergence rate analysis  of an
inexact dual gradient ascent method. Let $\set{\alpha_j}_{j\geq
0}$ be a sequence of positive numbers. We denote by $S_k :=
\sum_{j=0}^k\alpha_j$. In this section we consider an inexact
gradient method for updating the dual variables:
\begin{equation}\label{iter_outer}
\boxed{~~~\lambda_{k+1} := \lambda_k + \alpha_k \nabla{\bar{d}_{\rho}}(\lambda_k), ~~~ \tag{\textbf{IDGM}}}
\end{equation}
where  $\alpha_k \in [\underline{L}^{-1},
L_{\mathrm{d}}^{-1}]\subset (0, +\infty)$ is a given step size and
$\underline{L} \geq L_{\mathrm{d}}$. We recall that the inexact
gradient is given by:
\begin{equation*}
\nabla{\bar{d}_{\rho}}(\lambda_k) := A \bar{z}_k - b,
\end{equation*}
where $\bar{z}_k := \bar{z}(\lambda_k)$.  The following theorem provides estimates on dual suboptimality for the scheme \eqref{iter_outer}.

\begin{theorem}\label{theorem_dual}
Suppose that the conditions of Theorem \ref{th:cond_echiv} are
satisfied. Let $\set{\lambda_k}_{k\geq 0}$ be a sequence generated
by \eqref{iter_outer} and $\{\hat{\lambda}_k\}_{k\geq 0}$ be an
average sequence derived from $\set{\lambda_k}_{k\geq 0}$ as
$\hat{\lambda}_k := S^{-1}_k\sum_{j=0}^k\alpha_j\lambda_{j+1}$.
Then, for any $k \geq 1$, the following estimate for the dual
suboptimality holds:
\begin{equation}\label{eq_conv}
f^{*} - d_{\rho}(\hat{\lambda}_k) \leq
\frac{\underline{L}R_\text{d}^2}{2(k+1)} +
\left(1+\sqrt{2\Lp}R_{\text{p}}\right)\ei,
\end{equation}
where we define  $R_\text{d} := \norm{\lambda_0 - \lambda^{*}}$.
\end{theorem}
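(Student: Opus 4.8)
The plan is to combine the two inequalities of Theorem~\ref{th:cond_echiv} in the classical way used to analyze gradient methods, while carrying the inexactness term $C_Z\ei$ with $C_Z := 1 + \sqrt{2\Lp}R_{\text{p}}$. The two ingredients are a one-step progress (descent) bound from the lower estimate and an optimality-gap bound from the upper (concavity) estimate; these are glued together by the three-point identity and then telescoped using the weighted average $\hat{\lambda}_k$.

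First I would use strong duality: since Slater's condition holds and the optimal sets of \eqref{eq:dual_prob} and \eqref{eq:aug_dual_prob} coincide, $d_{\rho}(\lambda^{*}) = f^{*}$. Plugging $\mu = \lambda^{*}$, $\lambda = \lambda_k$ into the right-hand (concavity) inequality of Theorem~\ref{th:cond_echiv} gives $f^{*} - \bar{d}_{\rho}(\lambda_k) \leq \iprod{\nabla{\bar{d}_{\rho}}(\lambda_k), \lambda^{*} - \lambda_k}$. Since $\lambda_{k+1} - \lambda_k = \alpha_k\nabla{\bar{d}_{\rho}}(\lambda_k)$, the three-point identity
\[
\iprod{\lambda_{k+1} - \lambda_k, \lambda^{*} - \lambda_k} = \tfrac{1}{2}\big(\norm{\lambda^{*} - \lambda_k}^2 - \norm{\lambda^{*} - \lambda_{k+1}}^2 + \norm{\lambda_{k+1} - \lambda_k}^2\big)
\]
rewrites the right-hand side as $\frac{1}{2\alpha_k}\big(\norm{\lambda^{*} - \lambda_k}^2 - \norm{\lambda^{*} - \lambda_{k+1}}^2\big) + \frac{\alpha_k}{2}\norm{\nabla{\bar{d}_{\rho}}(\lambda_k)}^2$.

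The leftover gradient-norm term I would absorb using the left-hand (descent) inequality of Theorem~\ref{th:cond_echiv} with $\mu = \lambda_{k+1}$, $\lambda = \lambda_k$, which after substituting the iteration reads $d_{\rho}(\lambda_{k+1}) \geq \bar{d}_{\rho}(\lambda_k) + \alpha_k\big(1 - \tfrac{L_{\mathrm{d}}\alpha_k}{2}\big)\norm{\nabla{\bar{d}_{\rho}}(\lambda_k)}^2 - C_Z\ei$. Here the step-size restriction $\alpha_k \leq L_{\mathrm{d}}^{-1}$ is essential: it forces $1 - \tfrac{L_{\mathrm{d}}\alpha_k}{2} \geq \tfrac{1}{2}$, so that $\frac{\alpha_k}{2}\norm{\nabla{\bar{d}_{\rho}}(\lambda_k)}^2 \leq d_{\rho}(\lambda_{k+1}) - \bar{d}_{\rho}(\lambda_k) + C_Z\ei$. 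Substituting this into the previous display makes the $\bar{d}_{\rho}(\lambda_k)$ terms cancel and yields the clean per-iteration bound
\[
f^{*} - d_{\rho}(\lambda_{k+1}) \leq \frac{1}{2\alpha_k}\big(\norm{\lambda^{*} - \lambda_k}^2 - \norm{\lambda^{*} - \lambda_{k+1}}^2\big) + C_Z\ei.
\]

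Finally I would multiply this inequality by $\alpha_k$ --- which cancels the $\frac{1}{2\alpha_k}$ factor and is precisely why the averaging weights are the $\alpha_j$ --- and sum from $j = 0$ to $k$; the norm differences telescope, leaving $\sum_{j=0}^k \alpha_j\big(f^{*} - d_{\rho}(\lambda_{j+1})\big) \leq \tfrac{1}{2}R_{\text{d}}^2 + S_k C_Z\ei$. Dividing by $S_k$ and invoking concavity of $d_{\rho}$ (Jensen's inequality) to pass from the weighted average of $d_{\rho}(\lambda_{j+1})$ to $d_{\rho}(\hat{\lambda}_k)$ turns the left side into $f^{*} - d_{\rho}(\hat{\lambda}_k)$. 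The bound $S_k \geq (k+1)\underline{L}^{-1}$, coming from $\alpha_j \geq \underline{L}^{-1}$, then gives the stated rate $\frac{\underline{L}R_{\text{d}}^2}{2(k+1)} + C_Z\ei$. I expect the only delicate point to be the bookkeeping that lets the descent and concavity estimates combine so that the unknown quantity $\bar{d}_{\rho}(\lambda_k)$ drops out and the inexactness enters only additively as $C_Z\ei$ per step; the rest is routine telescoping and one application of Jensen's inequality.
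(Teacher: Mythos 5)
Your proof is correct and follows essentially the same route as the paper's: the paper expands $\norm{\lambda_{j+1}-\lambda^{*}}^2$ and applies the two inequalities of Theorem~\ref{th:cond_echiv} at $\mu=\lambda^{*}$ and $\mu=\lambda_{j+1}$, which is algebraically identical to your three-point identity plus descent-absorption argument, and it concludes with the same telescoping, Jensen step, and bound $S_k \geq \underline{L}^{-1}(k+1)$. Your per-iteration inequality is exactly the paper's estimate, just rearranged by a factor of $2\alpha_j$.
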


\begin{proof}
Let $r_j := \norm{\lambda_j -\lambda^{*}}$. By using
\eqref{iter_outer} and the estimates \eqref{ineq_approx} we have:
\begin{align}\label{eq:estimate1}
r_{j+1}^2 &= r^2_j +2\iprod{\lambda_{j+1} -\lambda_j,\lambda_{j+1} -\lambda^{*}}  -\norm{\lambda_{j+1} - \lambda_j}^2 \nonumber\\
 &\!\!\!\!\!\!\!\overset{\tiny\eqref{iter_outer}}{=} r_j^2 - 2\alpha_j\iprod{\nabla{\bar{d}}_{\rho}(\lambda_j), \lambda^{*}-\lambda_j}  - \left(1
-\alpha_jL_{\mathrm{d}}\right)\norm{\lambda_{j+1}-\lambda_j}^2 \nonumber\\
& ~\qquad+
2\alpha_j\left[\iprod{\nabla{\bar{d}}_{\rho}(\lambda_j),\lambda_{j+1}
- \lambda_j} - \frac{L_{\mathrm{d}}}{2}\norm{\lambda_{j+1} -
\lambda_j}^2\right]
\nonumber\\
&\!\!\overset{\tiny\eqref{ineq_approx}}{\leq} r_j^2 +
2\alpha_j\left[\bar{d}_{\rho}(\lambda_j) -
d_{\rho}(\lambda^{*})\right] + 2\alpha_j\left[d_{\rho}(\lambda_{j
+ 1}) - \bar{d}_{\rho}(\lambda_j)\right]\nonumber\\
& ~~\quad+ 2\alpha_jC_Z\ei - (1 -\alpha_jL_{\mathrm{d}})\norm{\lambda_{j+1} -\lambda_j}^2 \nonumber\\
&\leq ~r_j^2 - 2\alpha_j\left[d_{\rho}(\lambda^{*}) -
d_{\rho}(\lambda_{j+1})\right] + 2\alpha_jC_Z\ei.
\end{align}
Here the last inequality follows from $\alpha_j\in
[\underline{L}^{-1},L_{\mathrm{d}}^{-1}]$. Summing up the last
inequality from $j =0$ to $k$ and taking into account that
$d_{\rho}(\lambda^{*}) \equiv f^{*}$, we obtain:
\begin{equation*}
\sum_{j=0}^{k}2\alpha_j[f^{*}-d_{\rho}(\lambda_{j+1})] \leq
R_\text{d}^2 + 2S_kC_Z\ei.
\end{equation*}
Now, by the concavity of $d_{\rho}$ and the definition of
$\hat{\lambda}_k$, this inequality implies:
\begin{equation*}
S_k\left[f^{*} - d_{\rho}(\hat{\lambda}_k)\right] \leq
\frac{R_\text{d}^2}{2} + S_kC_Z\ei.
\end{equation*}
Note that $f^{*} - d_{\rho}(\hat{\lambda}_k)\geq 0$ and $S_k \geq \underline{L}^{-1}(k+1)$. The last inequality together with the definition of $C_Z$ imply
\eqref{eq_conv}.
\end{proof}

Next, we show how we can compute  an approximate  solution of the
primal problem \eqref{eq:primal_prob}. For this approximate solution
we estimate the feasibility violation  and the bound on the
suboptimality for \eqref{eq:primal_prob}. Let us consider  the
following average sequence:
\begin{equation}\label{averaging_gradient}
\hat{z}_k := S_k^{-1}\sum_{j=0}^k\alpha_j\bar{z}_{j}.
\end{equation}
Since $\bar{z}_j\in Z$ for  all $j\geq 0$ and $Z$ is convex, then
$\hat{z}_k\in Z$. From the iteration of Algorithm \eqref{iter_outer}
and \eqref{averaging_gradient}, by induction, we have:
\begin{equation}\label{new_form}
\lambda_{k+1} = \lambda_0 + S_k(A\hat{z}_k - b).
\end{equation}
The following two theorems provide bounds on the primal
infeasibility and the primal suboptimality, respectively.

\begin{theorem}\label{theorem_fesa}
Under assumptions of Theorem \ref{theorem_dual}, the sequence $\hat{z}_k$ generated by \eqref{averaging_gradient} satisfies the following
upper bound on the infeasibility for primal problem \eqref{eq:primal_prob}:
\begin{equation}\label{eq_fezab}
\norm{A\hat{z}_k - b} \leq \nu(k,\varepsilon_{\mathrm{in}}),
\end{equation}
where $\nu(k,\varepsilon_{\mathrm{in}}) :=
\frac{2\underline{L}R_\text{d}}{k+1} +
\sqrt{\frac{2\underline{L}\left(1+\sqrt{2\Lp}R_{\text{p}}\right)\varepsilon_{\mathrm{in}}}{k+1}}$.
\end{theorem}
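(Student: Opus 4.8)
The plan is to convert the feasibility residual into a statement about how far the dual iterate has travelled, and then recycle the estimates already built in the proof of Theorem~\ref{theorem_dual}. The natural starting point is the identity \eqref{new_form}, $\lambda_{k+1} = \lambda_0 + S_k(A\hat{z}_k - b)$, which rearranges to $A\hat{z}_k - b = S_k^{-1}(\lambda_{k+1} - \lambda_0)$. Consequently $\norm{A\hat{z}_k - b} = S_k^{-1}\norm{\lambda_{k+1}-\lambda_0}$, so the entire problem reduces to bounding $\norm{\lambda_{k+1}-\lambda_0}$ and dividing by $S_k$.

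First I would split this by the triangle inequality, $\norm{\lambda_{k+1}-\lambda_0} \leq \norm{\lambda_{k+1}-\lambda^{*}} + \norm{\lambda^{*}-\lambda_0} = r_{k+1} + R_{\text{d}}$, using the notation $r_j := \norm{\lambda_j - \lambda^{*}}$ and $R_{\text{d}} = r_0$ from Theorem~\ref{theorem_dual}. To control $r_{k+1}$ I return to the one-step recursion \eqref{eq:estimate1}, namely $r_{j+1}^2 \leq r_j^2 - 2\alpha_j[d_{\rho}(\lambda^{*}) - d_{\rho}(\lambda_{j+1})] + 2\alpha_j C_Z \ei$ with $C_Z := 1+\sqrt{2\Lp}R_{\text{p}}$. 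Since $\lambda^{*}$ maximizes $d_{\rho}$, each term $d_{\rho}(\lambda^{*}) - d_{\rho}(\lambda_{j+1})$ is nonnegative and may simply be dropped; telescoping the surviving inequality from $j=0$ to $k$ then gives $r_{k+1}^2 \leq R_{\text{d}}^2 + 2 S_k C_Z \ei$.

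Combining the two bounds yields $\norm{\lambda_{k+1}-\lambda_0} \leq R_{\text{d}} + \sqrt{R_{\text{d}}^2 + 2 S_k C_Z \ei}$. Applying the elementary inequality $\sqrt{a+b}\leq \sqrt{a}+\sqrt{b}$ to the square root produces $\norm{\lambda_{k+1}-\lambda_0} \leq 2 R_{\text{d}} + \sqrt{2 S_k C_Z \ei}$. Dividing by $S_k$ gives $\norm{A\hat{z}_k - b} \leq 2R_{\text{d}}/S_k + \sqrt{2 C_Z \ei / S_k}$, and finally I invoke the lower bound $S_k \geq \underline{L}^{-1}(k+1)$ (already used to close Theorem~\ref{theorem_dual}) to replace $S_k^{-1}$ by $\underline{L}/(k+1)$ in both terms, recovering exactly $\nu(k,\ei) = \frac{2\underline{L}R_{\text{d}}}{k+1} + \sqrt{\frac{2\underline{L}(1+\sqrt{2\Lp}R_{\text{p}})\ei}{k+1}}$.

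The argument is almost entirely bookkeeping; the only genuine idea is the reduction through \eqref{new_form}, which ties the primal residual to the distance covered by the dual sequence. The step I would watch most carefully is the claim $r_{k+1}^2 \leq R_{\text{d}}^2 + 2 S_k C_Z \ei$: one must verify that the inexactness contribution accumulates only additively, which it does since each $\alpha_j C_Z \ei$ enters \eqref{eq:estimate1} with a plus sign, and that discarding the dual-suboptimality terms is legitimate, which holds by optimality of $\lambda^{*}$. Once that estimate is secured, the two elementary bounds $\sqrt{a+b}\leq\sqrt{a}+\sqrt{b}$ and $S_k\geq\underline{L}^{-1}(k+1)$ finish the proof without further difficulty.
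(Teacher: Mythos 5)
Your proof is correct and is essentially the paper's own argument: both rest on the identity \eqref{new_form}, on the telescoped estimate $\norm{\lambda_{k+1}-\lambda^{*}}^2 \leq R_\text{d}^2 + 2S_kC_Z\ei$ obtained from \eqref{eq:estimate1} by dropping the nonnegative dual-suboptimality terms, on $\sqrt{a+b}\leq\sqrt{a}+\sqrt{b}$, and on $S_k \geq \underline{L}^{-1}(k+1)$. The only difference is cosmetic: you combine \eqref{new_form} with the forward triangle inequality $\norm{\lambda_{k+1}-\lambda_0} \leq \norm{\lambda_{k+1}-\lambda^{*}} + R_\text{d}$ and divide by $S_k$, whereas the paper uses the reverse triangle inequality to obtain a quadratic inequality in $\norm{A\hat{z}_k-b}$ and solves it; both routes yield the identical intermediate bound $\norm{A\hat{z}_k - b} \leq \left(R_\text{d} + \sqrt{R_\text{d}^2 + 2S_kC_Z\ei}\right)/S_k$ before the final estimates are applied.
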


\begin{proof}
From \eqref{eq:estimate1} we have:
\begin{equation*}
\norm{\lambda_{j+1} \!-\! \lambda^{*}}^2 \leq  \norm{\lambda_j
\!-\! \lambda^{*}}^2 - 2\alpha_j[d_{\rho}(\lambda^{*}) \!-\!
d_{\rho}(\lambda_{j+1})] + 2\alpha_jC_Z\varepsilon_{\mathrm{in}}
\leq \norm{\lambda_j \!-\! \lambda^{*}}^2 +
2\alpha_jC_Z\varepsilon_{\mathrm{in}}.
\end{equation*}
Here the last inequality follows from the fact that
$d_{\rho}(\lambda^{*}) - d_{\rho}(\lambda_{j+1})\geq 0$ and
$\alpha_j > 0$. By induction, it follows from the above inequality
that:
\begin{equation}\label{eq:est2}
\norm{\lambda_{k+1} - \lambda^{*}}^2  \leq
\norm{\lambda_0-\lambda^{*}}^2 + 2S_kC_Z\ei.
\end{equation}
Now, from \eqref{new_form} we have:
\begin{align*}
\norm{\lambda_{k+1} - \lambda^{*}}^2  &=
\norm{\lambda_0 - \lambda^{*} + S_k(A\hat{z}_k-b)}^2 \geq
\left[ \norm{\lambda_0-\lambda^{*}} - S_k\norm{A\hat{z}-b} \right]^2\\
& = \norm{\lambda_0-\lambda^{*}}^2 - 2S_k\norm{\lambda_0-\lambda^{*}}\norm{A\hat{z}_k
- b} + S_k^2\norm{A\hat{z}_k - b}^2.
\end{align*}
Substituting this inequality into \eqref{eq:est2} we obtain:
\begin{equation*}
S_k^2\norm{A\hat{z}_k \!-\! b}^2 - 2S_k\norm{\lambda_0 \!-\!
\lambda^{*}}\norm{A\hat{z}_k \!-\! b} \leq 2S_kC_Z\ei.
\end{equation*}
The last inequality implies:
\begin{equation*}
\norm{A\hat{z}_k - b} \leq \frac{R_\text{d} + [R_\text{d}^2 + 2S_kC_Z\ei]^{1/2}}{S_k} \leq \frac{2R_\text{d}}{S_k} +
\sqrt{\frac{2C_Z\varepsilon_{\mathrm{in}}}{S_k}}
\end{equation*}
Note that $S_k \geq \underline{L}^{-1}(k+1)$. This inequality implies \eqref{eq_fezab}.
\end{proof}

\begin{theorem}\label{theorem_primal}
Under the assumptions of Theorem \ref{theorem_fesa}, the primal
suboptimality can  be characterized by the following lower and upper
bounds:
\begin{equation*}
 -\left[ \norm{\lambda^{*}}  + \frac{\rho}{2}\nu(k,\varepsilon_{\mathrm{in}})  \right]  \nu(k,\varepsilon_{\mathrm{in}}) \leq  f(\hat{z}_k) - f^{*}
\leq  \frac{\underline{L}\norm{\lambda_0}^2}{2(k + 1)} +
\left(1+\sqrt{2L_{\mathrm{p}}}R_{\mathrm{p}}\right)\varepsilon_{\mathrm{in}}.
\end{equation*}
\end{theorem}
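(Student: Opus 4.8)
The plan is to prove the two inequalities separately; both rely on the feasibility estimate of Theorem~\ref{theorem_fesa} and on the weak duality relation $d_\rho(\lambda)\le f^{*}$ for all $\lambda$, which holds because $d_\rho(\lambda)=\min_{z\in Z}\mathcal{L}_\rho(z,\lambda)\le\mathcal{L}_\rho(z^{*},\lambda)=f^{*}$ at a primal optimum $z^{*}$ (feasibility of $z^{*}$ kills the last two terms). For the lower bound I would start from the optimality of $\lambda^{*}$, namely $f^{*}=d_\rho(\lambda^{*})=\min_{z\in Z}\mathcal{L}_\rho(z,\lambda^{*})\le\mathcal{L}_\rho(\hat{z}_k,\lambda^{*})$, where the last step uses $\hat{z}_k\in Z$. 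Expanding the augmented Lagrangian gives
\[
f^{*}\le f(\hat{z}_k)+\iprod{\lambda^{*},A\hat{z}_k-b}+\frac{\rho}{2}\norm{A\hat{z}_k-b}^2 .
\]
Rearranging, applying Cauchy--Schwarz to the inner product, and factoring produces $f(\hat{z}_k)-f^{*}\ge-\norm{A\hat{z}_k-b}\left[\norm{\lambda^{*}}+\frac{\rho}{2}\norm{A\hat{z}_k-b}\right]$. Since the map $t\mapsto -t\left(\norm{\lambda^{*}}+\frac{\rho}{2}t\right)$ is decreasing for $t\ge0$, substituting the feasibility bound $\norm{A\hat{z}_k-b}\le\nu(k,\ei)$ of Theorem~\ref{theorem_fesa} yields exactly the claimed lower bound.

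For the upper bound I would exploit the convexity of $f$ via Jensen's inequality applied to the average $\hat{z}_k=S_k^{-1}\sum_{j=0}^k\alpha_j\bar{z}_j$, giving $f(\hat{z}_k)\le S_k^{-1}\sum_{j=0}^k\alpha_j f(\bar{z}_j)$. Next I would rewrite each term using the definitions of $\bar{d}_\rho$ and $\nabla\bar{d}_\rho$ as
\[
f(\bar{z}_j)=\bar{d}_\rho(\lambda_j)-\iprod{\lambda_j,A\bar{z}_j-b}-\frac{\rho}{2}\norm{A\bar{z}_j-b}^2 .
\]
Dropping the nonpositive quadratic term and using \eqref{iter_outer} in the form $A\bar{z}_j-b=\alpha_j^{-1}(\lambda_{j+1}-\lambda_j)$, the weighted sum of the linear terms telescopes through the identity $\iprod{\lambda_j,\lambda_{j+1}-\lambda_j}=\frac{1}{2}\left(\norm{\lambda_{j+1}}^2-\norm{\lambda_j}^2-\norm{\lambda_{j+1}-\lambda_j}^2\right)$, which is precisely where the term $\frac{1}{2}\norm{\lambda_0}^2$ appears.

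To control $\sum_j\alpha_j\bar{d}_\rho(\lambda_j)$ I would apply the left-hand inequality of \eqref{ineq_approx} with $\mu=\lambda_{j+1}$, $\lambda=\lambda_j$, use $\iprod{\nabla\bar{d}_\rho(\lambda_j),\lambda_{j+1}-\lambda_j}=\alpha_j^{-1}\norm{\lambda_{j+1}-\lambda_j}^2$, and invoke weak duality $d_\rho(\lambda_{j+1})\le f^{*}$ to obtain $\bar{d}_\rho(\lambda_j)\le f^{*}-\left(\alpha_j^{-1}-\frac{L_{\mathrm{d}}}{2}\right)\norm{\lambda_{j+1}-\lambda_j}^2+C_Z\ei$, with $C_Z:=1+\sqrt{2\Lp}R_{\text{p}}$. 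Collecting all $\norm{\lambda_{j+1}-\lambda_j}^2$ contributions from the telescoping and from this estimate, their net coefficient is $\frac{1}{2}(\alpha_j L_{\mathrm{d}}-1)$, which is nonpositive exactly because $\alpha_j\le L_{\mathrm{d}}^{-1}$. Discarding these nonpositive terms together with $-\frac{1}{2}\norm{\lambda_{k+1}}^2$, dividing by $S_k$, and using $S_k\ge\underline{L}^{-1}(k+1)$ gives $f(\hat{z}_k)-f^{*}\le\frac{\underline{L}\norm{\lambda_0}^2}{2(k+1)}+C_Z\ei$, which is the asserted upper bound.

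The main obstacle is the bookkeeping in the upper-bound argument: one must telescope the linear terms and simultaneously track the quadratic terms $\norm{\lambda_{j+1}-\lambda_j}^2$ arising from two distinct sources, then verify that their combined coefficient is nonpositive. This cancellation hinges entirely on the step-size restriction $\alpha_j\in[\underline{L}^{-1},L_{\mathrm{d}}^{-1}]$, so I would keep that bound in view throughout; the lower bound, by contrast, is a direct weak-duality computation.
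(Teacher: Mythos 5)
Your proposal is correct and follows essentially the same route as the paper: the lower bound is the identical weak-duality plus Cauchy--Schwarz computation combined with the feasibility estimate of Theorem \ref{theorem_fesa}, and the upper bound is the same weighted telescoping argument (multiply by $\alpha_j$, telescope $\iprod{\lambda_j,\lambda_{j+1}-\lambda_j}$, apply Jensen and $S_k \geq \underline{L}^{-1}(k+1)$). The only cosmetic difference is in the upper bound's bookkeeping: the paper bounds $\bar{d}_{\rho}(\lambda_j) \leq d_{\rho}(\lambda_j) + \left(1+\sqrt{2\Lp}R_{\text{p}}\right)\ei$ directly from convexity of $\mathcal{L}_{\rho}$ and the stopping criterion (equivalently, \eqref{ineq_approx} with $\mu = \lambda_j$) and then cancels the positive quadratic $\tfrac{1}{2\alpha_j}\norm{\lambda_{j+1}-\lambda_j}^2$ against the retained term $\tfrac{\rho}{2}\norm{A\bar{z}_j - b}^2$, whereas you invoke \eqref{ineq_approx} at $\mu = \lambda_{j+1}$ and cancel against $-\left(\alpha_j^{-1} - \tfrac{L_{\mathrm{d}}}{2}\right)\norm{\lambda_{j+1}-\lambda_j}^2$ -- both cancellations hinging on the same step-size restriction $\alpha_j \leq L_{\mathrm{d}}^{-1} = \rho$.
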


\begin{proof}
Let us first prove the left-hand side inequality. Since
$f^{*}=d_{\rho}(\lambda^{*})$, by using the definition of
$\mathcal{L}_{\rho}(\hat{z}_k,\lambda^{*})$ and the Cauchy-Schwartz
inequality we get:
\begin{align*}
f^{*} &= d_{\rho}(\lambda^{*}) \leq
\mathcal{L}_{\rho}(\hat{z}_k,\lambda^{*})= f(\hat{z}_k) + \iprod{\lambda^{*}, A \hat{z}_k - b} + \frac{\rho}{2}\norm{A\hat{z}_k - b}^2\\
&\leq f(\hat{z}_k)+\norm{\lambda^{*}}\norm{A\hat{z}_k - b} +
\frac{\rho}{2}\norm{A \hat{z}_k-b}^2 \leq f(\hat{z}_k) +
\norm{\lambda^{*}}\nu(k,\varepsilon_{\mathrm{in}}) +
\frac{\rho}{2}\nu(k,\varepsilon_{\mathrm{in}})^2.
\end{align*}
Here, the last inequality follows from Theorem \ref{theorem_fesa}.
In order to prove the right-hand side inequality we first use the
convexity of $\mathcal{L}_{\rho}$ and the assumptions of Theorem
\ref{th:cond_echiv}:
\begin{align*}
\mathcal{L}_{\rho}(\bar{z}_j,\lambda_j) \leq
\mathcal{L}_{\rho}\left(z^*(\lambda_j),\lambda_j
\right)-\iprod{\nabla \mathcal{L}_{\rho}(\bar{z}_j,\lambda_j),
z^*(\lambda_j)-\bar{z}_j }\leq d_{\rho}(\lambda_j)+C_Z\ei.
\end{align*}
Previous inequality together with the definition of
$\mathcal{L}_{\rho}$ and $d_{\rho}(\lambda_j) \leq f^*$ lead to:
\begin{equation*}
f(\bar{z}_j)+\iprod{\lambda_j, A \bar{z}_j - b} +
\frac{\rho}{2}\norm{A\bar{z}_j - b}^2 - f^{*} \leq
C_Z\varepsilon_{\mathrm{in}}.
\end{equation*}
Using the iteration of Algorithm \eqref{iter_outer} and
$\alpha_j\leq \rho = L_{\mathrm{d}}^{-1}$ we obtain:
\begin{align*}
f(\bar{z}_j) - f^{*}  &\leq C_Z\varepsilon_{\mathrm{in}} -
\iprod{\lambda_j, \alpha_j^{-1}(\lambda_{j+1} - \lambda_j)}
-\frac{\rho\alpha_j^{-2}}{2}\norm{\lambda_{j+1}-\lambda_j}^2\\
&\leq \frac{1}{2\alpha_j}(\norm{\lambda_j}^2 -
\norm{\lambda_{j+1}}^2)+C_Z\varepsilon_{\mathrm{in}}.
\end{align*}
Multiplying this inequality with  $\alpha_j$ and then summing up
these inequalities from $j=0$ to $k$ we get:
\begin{equation*}
\sum_{j=0}^k\alpha_j(f(\bar{z}_j)-f^{*}) \leq
\frac{1}{2}\left(\norm{\lambda_0}^2-\norm{\lambda_{k+1}}^2\right)
+ S_kC_Z\varepsilon_{\mathrm{in}}\leq
\frac{1}{2}\norm{\lambda_0}^2 +  S_kC_Z\varepsilon_{\mathrm{in}}.
\end{equation*}
Now, using the definition of $\hat{z}_k$ and the convexity of $f$
we can deduce that:
\begin{equation*}
f(\hat{z}_k) - f^{*} \leq \frac{\norm{\lambda_0}^2}{2S_k} +
C_Z\varepsilon_{\mathrm{in}}.
\end{equation*}
Finally, by taking into account that $S_k \geq \underline{L}(k+1)$, we get from the last estimate the right-hand side inequality.
\end{proof}

Let us fix the outer accuracy $\eo$. We want to find  the number
of outer iterations $k_{\mathrm{out}}$ and a relation between
$\eo$ and $\ei$ such that after this number of iterations of
Algorithm \eqref{iter_outer} the estimates
$(\hat{z}_{k_{\text{out}}},\hat{\lambda}_{k_{\text{out}}})$
satisfy \eqref{eq:out1}. For this purpose we can choose the
following values for $k_{\mathrm{out}}$ and $\ei$:
\begin{equation}
\label{eq_choice_IDGM} k_{\mathrm{out}} := \left\lfloor
\frac{\underline{L}R_\text{d}^2}{\eo}\right\rfloor ~\textrm{and}~
\ei := \frac{1}{2\left(1+\sqrt{2\Lp}R_{\text{p}}\right)}\eo.
\end{equation}
Thus, we  conclude from Theorems \ref{theorem_dual},
\ref{theorem_fesa} and \ref{theorem_primal} that for these choices
of $k_{\mathrm{out}}$ and $\ei$ the following estimates hold:

\begin{align*}
&f^{*}  - d_{\rho}(\hat{\lambda}_{k_{\mathrm{out}}})  \leq \eo, ~
\hat{z}_{k_{\text{out}}} \in Z,  ~\norm{A \hat{z}_{k_{\text{out}}} -b} \leq \frac{3}{R_\text{d}}\varepsilon_{\mathrm{out}} ~~\mathrm{and}\\
-&\left(\frac{3\norm{\lambda^*}}{R_\text{d}} +
\frac{9\rho}{2R_\text{d}^2}\eo\right)\eo \leq
f(\hat{z}_{k_{\text{out}}})  - f^{*} \leq \left(
 \frac{1}{2} + \frac{\norm{\lambda_0}^2}{2R_\text{d}^2}   \right
 )\eo.
\end{align*}

Finally, we are ready to summarize the above convergence rate analysis in the following algorithm.

\begin{algorithm}{\textit{Inexact dual gradient method} ($\textbf{IDGM}$)}\label{alg:A1}
\newline
\textbf{Initialization:} Choose parameters $\rho > 0$ and $0 <
L_{\mathrm{d}} \leq \underline{L}$. Choose an initial point
$\lambda_0\in\rset^m$.
\newline
\textbf{Outer iteration: } For $k=0, 1,\dots, k_{\mathrm{out}}$, perform:
\begin{itemize}
\item[]Step 1. (\textit{Inner loop}). For given $\lambda_k$, solve
the inner problem \eqref{eq:inner_problem} with accuracy $\ei$,
such that one of the stopping criterions \eqref{eq:criterion_a} -
\eqref{eq:inner_criterion} are satisfied, to obtain $\bar{z}_k$.
\item[]Step 2. Form the approximate gradient vector of $d_{\rho}$
as $\nabla{\bar{d}}_{\rho}(\lambda_k) := A\bar{z}_k - b$.
\item[]Step 3. Select $\alpha_k\in [\underline{L}^{-1},
L_{\mathrm{d}}^{-1}]$ and update $S_{k} := \sum_{j=0}^k \alpha_j$.
\item[]Step 4. Update $\lambda_{k+1} := \lambda_k +
\alpha_k\nabla{\bar{d}}_{\rho}(\lambda_k)$.
\end{itemize}
\textbf{Output}: $\hat{z}_{k_{\mathrm{out}}} := S_{k_{\mathrm{out}}}^{-1}\sum_{j=0}^{k_{\mathrm{out}}}\alpha_j\bar{z}_j$.
\newline
\end{algorithm}

The penalty parameter $\rho$ in this algorithm  can   be also
updated  adaptively by using e.g. the procedure given in
\cite{Ham:05}.


\subsection{Inexact  dual fast gradient method}\label{subsec_dual_fast_gradient}
In this subsection we discuss a fast gradient scheme for solving
the augmented Lagrangian dual problem \eqref{eq:aug_dual_prob}.
Fast gradient schemes were first proposed by Nesterov
\cite{Nes:04a} and have also been discussed in the context of dual
decomposition in \cite{NecSuy:08}. A modification of these schemes
for the case of inexact information can be also found in
\cite{DevGli:11}. We shortly present such a scheme as follows.
Given a positive sequence $\set{\theta_k}_{k\geq 0}\subset (0,
+\infty)$ with $\theta_0 = 1$, we define $S_k :=
\sum_{j=0}^k\theta_j$. Let us assume that the sequence
$\set{\theta_k}_{k\geq 0}$ satisfies $\theta_{k+1}^2 = S_{k+1}$
for all $k\geq 0$. This condition leads to:
\begin{equation}\label{eq:theta_update}
\theta_{k+1} := \frac{1}{2}( 1 + \sqrt{4\theta_k^2 + 1}) ~~\forall
k\geq 0~~\textrm{and}~\theta_0 := 1.
\end{equation}
Note that the sequence $\set{\theta_k}_{k\geq 0}$ generated by
\eqref{eq:theta_update} is increasing and satisfies:
\begin{equation}\label{eq:cond_theta}
0.5(k+1) \leq \theta_k \leq k+1  ~~\forall k\geq 0.
\end{equation}
We can also obtain $0.25(k+1)(k+2) < S_k < 0.5(k+1)(k+2)$ and
$\sum_{j=0}^kS_j < (k+1)(k+2)(k+3)/3$. Now, we consider the dual
fast gradient scheme as follows: given an initial point
$\lambda_0\in \rset^m$, we define two sequences of the dual
variables $\set{\lambda_k}_{k \geq 0}$ and $\set{\mu_k}_{k \geq
0}$ as:
\begin{equation}\label{update_1}
\boxed{~~
\begin{cases}
&\mu_k ~~~:= \lambda_k + L_{\mathrm{d}}^{-1}\nabla{\bar{d}_{\rho}}(\lambda_k)\\
&\lambda_{k + 1} := \left(1 - a_{k + 1} \right)\mu_k  + a_{k + 1}\Big[\lambda_0  +
L_{\mathrm{d}}^{-1}\sum_{i=0}^k\theta_i\nabla{\bar{d}}_{\rho}(\lambda_i)\Big],
\end{cases}~~\tag{\textbf{IDFGM}}}
\end{equation}
where the sequence $a_{k+1} := S_{k+1}^{-1}\theta_{k+1}$.

The following lemma,  which  represents an extension of the results
in \cite{NecSuy:08,Nes:04a} to the inexact case (see also
\cite{DevGli:11}), will be used to derive estimates  on both primal
and dual suboptimality and also primal infeasibility for the
proposed method.

\begin{lemma}\cite{DevGli:11,NecSuy:08}\label{prop_rec}
Under the assumptions of Theorem \ref{th:cond_echiv}, the two
sequences $\set{(\lambda_k,\mu_k)}_{k\geq 0}$ generated by the
dual fast gradient scheme \eqref{update_1} satisfy:
\begin{eqnarray}\label{eq_rec}
S_kd_{\rho}(\mu_k) &&\geq \max_{\lambda\in\rset^m}
\left\{\sum_{j=0}^k\theta_j\big[\bar{d}_{\rho}(\lambda_j) +
\iprod{\nabla{\bar{d}}_{\rho}(\lambda_j),
\lambda-\lambda_j}\big]  - \frac{L_{\mathrm{d}}}{2}\norm{\lambda-\lambda_0}^2 \right\} \nonumber\\
[-1.8ex]\\[-1.8ex]
&& ~~~-
\left(1+\sqrt{2\Lp}R_{\text{p}}\right)\varepsilon_{\mathrm{in}}\sum_{j=0}^kS_j
~~ \forall k\geq 0.\nonumber
\end{eqnarray}
\end{lemma}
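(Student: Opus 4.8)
The plan is to prove \eqref{eq_rec} by induction on $k$, since the right-hand side has the structure of an aggregated lower model that accumulates one linear piece $\theta_j[\bar d_\rho(\lambda_j)+\iprod{\nabla\bar d_\rho(\lambda_j),\lambda-\lambda_j}]$ at each step, minus a quadratic proximal term and a linearly growing error. The base case $k=0$ should follow almost directly from the left-hand inequality in \eqref{ineq_approx} of Theorem \ref{th:cond_echiv}, evaluated with $\lambda=\mu_0$ and using $S_0=\theta_0=1$, $\mu_0=\lambda_0+L_{\mathrm{d}}^{-1}\nabla\bar d_\rho(\lambda_0)$, together with the identity $\min_\xi\{\tfrac12\norm{\xi}^2+g^T\xi\}=-\tfrac12\norm{g}^2$ already exploited in the proof of Theorem \ref{th:cond_echiv}.

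For the induction step I would introduce the concave quadratic model
\begin{equation*}
\psi_k(\lambda) := \sum_{j=0}^k\theta_j\big[\bar d_\rho(\lambda_j)+\iprod{\nabla\bar d_\rho(\lambda_j),\lambda-\lambda_j}\big]-\frac{L_{\mathrm{d}}}{2}\norm{\lambda-\lambda_0}^2,
\end{equation*}
whose unique maximizer I will call $v_k$; note $v_k=\lambda_0+L_{\mathrm{d}}^{-1}\sum_{j=0}^k\theta_j\nabla\bar d_\rho(\lambda_j)$, which is exactly the bracketed point appearing in the update \eqref{update_1}. The induction hypothesis is $S_k d_\rho(\mu_k)\geq \psi_k(v_k)-C_Z\ei\sum_{j=0}^kS_j$, with $C_Z:=1+\sqrt{2\Lp}R_{\text{p}}$. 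I would then expand $\psi_{k+1}(v_{k+1})$ by completing the square around $v_k$: since $\psi_{k+1}=\psi_k+\theta_{k+1}[\bar d_\rho(\lambda_{k+1})+\iprod{\nabla\bar d_\rho(\lambda_{k+1}),\cdot-\lambda_{k+1}}]$ and $\psi_k$ is a quadratic with Hessian $-L_{\mathrm{d}}I$ peaked at $v_k$, one obtains the clean identity
\begin{equation*}
\psi_{k+1}(v_{k+1})=\psi_k(v_k)+\theta_{k+1}\big[\bar d_\rho(\lambda_{k+1})+\iprod{\nabla\bar d_\rho(\lambda_{k+1}),v_k-\lambda_{k+1}}\big]+\frac{\theta_{k+1}^2}{2L_{\mathrm{d}}}\norm{\nabla\bar d_\rho(\lambda_{k+1})}^2.
\end{equation*}
Invoking the induction hypothesis to replace $\psi_k(v_k)$ by $S_k d_\rho(\mu_k)$, and then using the right-hand (linearization/concavity) inequality in \eqref{ineq_approx} in the form $d_\rho(\mu_k)\geq \bar d_\rho(\lambda_{k+1})+\iprod{\nabla\bar d_\rho(\lambda_{k+1}),\mu_k-\lambda_{k+1}}$ evaluated at the base point $\lambda_{k+1}$, I would lower-bound the whole expression by a single linearization of $d_\rho$ at $\lambda_{k+1}$ acting on the convex combination $(1-a_{k+1})\mu_k+a_{k+1}v_k=\lambda_{k+1}$.

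The decisive bookkeeping step is to recognize, using $\theta_{k+1}^2=S_{k+1}$ and $a_{k+1}=S_{k+1}^{-1}\theta_{k+1}$, that the collected terms reassemble into $S_{k+1}$ times the left-hand quadratic lower bound of \eqref{ineq_approx} evaluated at $\mu_{k+1}=\lambda_{k+1}+L_{\mathrm{d}}^{-1}\nabla\bar d_\rho(\lambda_{k+1})$, which by Theorem \ref{th:cond_echiv} is $\leq S_{k+1}d_\rho(\mu_{k+1})$; the leftover $-C_Z\ei$ term contributes $-\theta_{k+1}C_Z\ei\leq -S_{k+1}C_Z\ei$, so the accumulated error grows to $C_Z\ei\sum_{j=0}^{k+1}S_j$, completing the induction. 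I expect the main obstacle to be precisely this algebraic reassembly: matching the coefficients of the gradient-norm term $\tfrac{\theta_{k+1}^2}{2L_{\mathrm{d}}}$ and the cross terms so that they coincide with $\tfrac{S_{k+1}}{2L_{\mathrm{d}}}\norm{\nabla\bar d_\rho(\lambda_{k+1})}^2$ and $S_{k+1}\iprod{\nabla\bar d_\rho(\lambda_{k+1}),\mu_{k+1}-\lambda_{k+1}}$, which requires the relation $\theta_{k+1}^2=S_{k+1}$ to be used in exactly the right places and careful tracking of how $\mu_k$, $v_k$ and $\lambda_{k+1}$ interlock through $a_{k+1}$; the error-accumulation term is routine once the noiseless identity is in hand, since each step adds at most $S_{k+1}C_Z\ei$.
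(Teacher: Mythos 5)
The paper itself gives no proof of Lemma \ref{prop_rec} --- it is imported from \cite{DevGli:11,NecSuy:08} --- so your argument has to be judged as a self-contained reconstruction, and as such it is correct: it is exactly the inexact-oracle estimate-sequence induction underlying those references. The base case via the left-hand inequality of \eqref{ineq_approx} at $(\lambda_0,\mu_0)$, the identity for $\psi_{k+1}(v_{k+1})$, the observation that $v_k$ is the bracketed point in \eqref{update_1}, the cancellation of all cross terms through $\lambda_{k+1}=(1-a_{k+1})\mu_k+a_{k+1}v_k$, the substitution $\theta_{k+1}^2=S_{k+1}$, and the closing application of the left-hand inequality of \eqref{ineq_approx} at $(\lambda_{k+1},\mu_{k+1})$ all check out and assemble into a complete proof of \eqref{eq_rec}. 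Two slips in the write-up should be repaired, though neither invalidates the plan. First, you quote the concavity inequality as $d_{\rho}(\mu_k)\geq \bar{d}_{\rho}(\lambda_{k+1})+\iprod{\nabla{\bar{d}}_{\rho}(\lambda_{k+1}),\mu_k-\lambda_{k+1}}$; the right-hand side of \eqref{ineq_approx} gives ``$\leq$'', and that is also the direction the induction needs, since after inserting the induction hypothesis you must bound $\psi_{k+1}(v_{k+1})$ from \emph{above} by $S_{k+1}d_{\rho}(\mu_{k+1})$ plus error. Second, the error bookkeeping at the end is garbled: the $-C_Z\ei$ term (with $C_Z:=1+\sqrt{2\Lp}R_{\text{p}}$) produced by the left-hand inequality of \eqref{ineq_approx} at $(\lambda_{k+1},\mu_{k+1})$ enters multiplied by $S_{k+1}$, not by $\theta_{k+1}$, so the step adds exactly $S_{k+1}C_Z\ei$ and the total becomes $C_Z\ei\sum_{j=0}^{k+1}S_j$; no comparison between $\theta_{k+1}$ and $S_{k+1}$ is needed, and the one you wrote, $-\theta_{k+1}C_Z\ei\leq -S_{k+1}C_Z\ei$, points the wrong way since $\theta_{k+1}\leq S_{k+1}$.
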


The next theorem gives an estimate on  dual suboptimality.

\begin{theorem}\label{theorem_dual_optim}
Under the assumptions of Theorem \ref{th:cond_echiv},  let
$\set{(\lambda_k,\mu_k)}_{k\geq 0}$ be the two sequences generated
by the scheme \eqref{update_1}. Then, an estimate on dual
suboptimality is given by the following expression:
\begin{equation*}
f^{*}-d_{\rho}(\mu_k)\leq\!
\frac{2L_{\mathrm{d}}R_\text{d}^2}{(k+1)(k+2)}+\frac{4(k+3)}{3}\left(1\!+\!\sqrt{2\Lp}R_{\text{p}}\right)\ei.
\end{equation*}
\end{theorem}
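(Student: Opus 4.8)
The plan is to start from the recursion established in Lemma~\ref{prop_rec}, since it already packages the inexact fast-gradient machinery into a single inequality relating $S_k d_{\rho}(\mu_k)$ to a maximization over $\lambda$. First I would lower-bound the bracketed term inside the maximum by restricting attention to the specific choice $\lambda = \lambda^{*}$; by the right-hand inequality in \eqref{ineq_approx} (the approximate concavity bound), each summand $\bar{d}_{\rho}(\lambda_j) + \iprod{\nabla{\bar{d}}_{\rho}(\lambda_j),\lambda^{*}-\lambda_j}$ is at least $d_{\rho}(\lambda^{*}) = f^{*}$. Hence the maximized quantity is at least $\sum_{j=0}^k \theta_j f^{*} - \frac{L_{\mathrm{d}}}{2}\norm{\lambda^{*}-\lambda_0}^2 = S_k f^{*} - \frac{L_{\mathrm{d}}}{2}R_\text{d}^2$, using the definitions $S_k = \sum_{j=0}^k\theta_j$ and $R_\text{d} = \norm{\lambda_0-\lambda^{*}}$.

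Substituting this lower bound into \eqref{eq_rec} yields
\begin{equation*}
S_k d_{\rho}(\mu_k) \geq S_k f^{*} - \frac{L_{\mathrm{d}}}{2}R_\text{d}^2 - \left(1+\sqrt{2\Lp}R_{\text{p}}\right)\ei\sum_{j=0}^k S_j.
\end{equation*}
Rearranging and dividing through by $S_k$ gives
\begin{equation*}
f^{*} - d_{\rho}(\mu_k) \leq \frac{L_{\mathrm{d}}R_\text{d}^2}{2S_k} + \left(1+\sqrt{2\Lp}R_{\text{p}}\right)\ei\,\frac{\sum_{j=0}^k S_j}{S_k}.
\end{equation*}
It then remains only to insert the explicit growth estimates for the partial sums stated just before the scheme \eqref{update_1}: the bound $S_k > 0.25(k+1)(k+2)$ turns the first term into $\frac{2L_{\mathrm{d}}R_\text{d}^2}{(k+1)(k+2)}$, and the pair of bounds $\sum_{j=0}^k S_j < (k+1)(k+2)(k+3)/3$ together with $S_k > 0.25(k+1)(k+2)$ bounds the ratio $\left(\sum_{j=0}^k S_j\right)/S_k$ by $\frac{(k+1)(k+2)(k+3)/3}{(k+1)(k+2)/4} = \frac{4(k+3)}{3}$, producing exactly the second term in the claimed estimate.

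The routine part is arithmetic with these sequence bounds, which are already supplied in the text. The only genuine content is the first step—recognizing that evaluating the inner maximum at $\lambda=\lambda^{*}$ and invoking the approximate-concavity inequality collapses the telescoping fast-gradient sum into $S_k f^{*}$—so I expect the main (though modest) obstacle to be verifying that the concavity-type bound in \eqref{ineq_approx} is the correct inequality to apply to each summand and that its direction is consistent with lower-bounding the maximum. Since Lemma~\ref{prop_rec} already absorbs the inexactness into the additive $\sum_{j=0}^k S_j$ term, no further tracking of the gradient errors is needed, and the proof reduces to this substitution followed by the stated partial-sum estimates.
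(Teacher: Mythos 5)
Your proposal is correct and follows exactly the paper's own argument: the paper likewise plugs the right-hand (approximate concavity) inequality of \eqref{ineq_approx} into \eqref{eq_rec}, evaluated at $\lambda = \lambda^{*}$, to get $S_k d_{\rho}(\mu_k) \geq S_k f^{*} - \frac{L_{\mathrm{d}}}{2}R_\text{d}^2 - C_Z\ei\sum_{j=0}^k S_j$, and then applies the same bounds $S_k > 0.25(k+1)(k+2)$ and $\sum_{j=0}^k S_j < (k+1)(k+2)(k+3)/3$. Your write-up merely spells out the steps the paper compresses into two lines, and every step, including the direction of the concavity inequality and the ratio bound $\bigl(\sum_{j=0}^k S_j\bigr)/S_k < 4(k+3)/3$, is sound.
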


\begin{proof}
By using  inequality  \eqref{ineq_approx} in \eqref{eq_rec} we
obtain:
\begin{align*}
S_kd_{\rho}(\mu_k) &\geq S_kd_{\rho}(\lambda^{*}) - \frac{L_{\mathrm{d}}}{2}\norm{\lambda^{*}-\lambda_0}^2 - C_Z\varepsilon_{\mathrm{in}}\sum_{j=0}^kS_j.
\end{align*}
Now, using the fact that $S_k > 0.25(k+1)(k+2)$ and
$\sum_{j=0}^kS_j < (k+1)(k+2)(k+3)/3$ and the definition of
$C_Z=1+\sqrt{2\Lp}R_{\text{p}}$, we obtain our result.
\end{proof}

We further define the following primal average sequence:
\begin{equation}\label{primal_point}
\hat{z}_k := S_k^{-1}\sum_{j=0}^k\theta_j\bar{z}_i.
\end{equation}
Next theorem gives an estimate  on  infeasibility of $\hat{z}_k$
for the original problem \eqref{eq:primal_prob}.

\begin{theorem}\label{theorem_primal_fesa}
Under the conditions of Theorem \ref{theorem_dual_optim}, the
point $\hat{z}_k$ defined by \eqref{primal_point} satisfies the
following estimate on primal feasibility violation:
\begin{equation}\label{eq:primal_fesa}
\norm{A\hat{z}_k - b} \leq v(k,\varepsilon_{\textrm{in}}),
\end{equation}
where $v(k,\ei) :=\frac{8L_{\mathrm{d}}R_\text{d}}{(k+1)(k+2)}+
4\sqrt{\frac{2L_{\mathrm{d}}(k+3)\left(1+\sqrt{2\Lp}R_{\text{p}}\right)\ei}{3(k+1)(k+2)}}$.
\end{theorem}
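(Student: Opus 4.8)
The plan is to transplant the feasibility argument of Theorem~\ref{theorem_fesa} to the fast-gradient setting, with the accumulated recursion \eqref{eq_rec} playing the role that the per-step descent inequality \eqref{eq:estimate1} played there. The first task is to identify the maximizer of the concave quadratic in $\lambda$ that sits inside the maximum in \eqref{eq_rec}; since its Hessian is $-L_{\mathrm{d}}I$, this maximizer is
\[
\lambda^{\mathrm{opt}}_k := \lambda_0 + L_{\mathrm{d}}^{-1}\sum_{j=0}^k\theta_j\nabla{\bar{d}}_{\rho}(\lambda_j).
\]
Because $\nabla{\bar{d}}_{\rho}(\lambda_j) = A\bar{z}_j - b$ and $\hat{z}_k = S_k^{-1}\sum_{j=0}^k\theta_j\bar{z}_j$, this point obeys $\lambda^{\mathrm{opt}}_k = \lambda_0 + L_{\mathrm{d}}^{-1}S_k(A\hat{z}_k - b)$, which is precisely the fast-gradient analogue of the identity \eqref{new_form} used in Theorem~\ref{theorem_fesa} and which couples the aggregated dual point to the infeasibility residual $A\hat{z}_k - b$.

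Next I would complete the square. Writing $C_Z := 1 + \sqrt{2\Lp}R_{\text{p}}$ and denoting by $\psi(\lambda)$ the quadratic function of $\lambda$ maximized in \eqref{eq_rec}, concavity gives $\psi(\lambda) = \psi(\lambda^{\mathrm{opt}}_k) - \tfrac{L_{\mathrm{d}}}{2}\norm{\lambda - \lambda^{\mathrm{opt}}_k}^2$. Evaluating at $\lambda = \lambda^{*}$ and invoking the upper (concavity-type) inequality of \eqref{ineq_approx}, namely $\bar{d}_{\rho}(\lambda_j) + \iprod{\nabla{\bar{d}}_{\rho}(\lambda_j), \lambda^{*} - \lambda_j} \geq d_{\rho}(\lambda^{*}) = f^{*}$ for each $j$, yields $\psi(\lambda^{*}) \geq S_k f^{*} - \tfrac{L_{\mathrm{d}}}{2}R_\text{d}^2$. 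Substituting this into \eqref{eq_rec}, using $d_{\rho}(\mu_k) \leq f^{*}$, and letting the term $C_Z\ei\sum_{j=0}^k S_j$ absorb the inexactness, I would arrive at the counterpart of \eqref{eq:est2},
\[
\norm{\lambda^{\mathrm{opt}}_k - \lambda^{*}}^2 \leq R_\text{d}^2 + \frac{2C_Z\ei}{L_{\mathrm{d}}}\sum_{j=0}^k S_j.
\]

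Finally, I would expand the left-hand side via $\lambda^{\mathrm{opt}}_k - \lambda^{*} = (\lambda_0 - \lambda^{*}) + L_{\mathrm{d}}^{-1}S_k(A\hat{z}_k - b)$ together with the Cauchy--Schwarz bound $\iprod{\lambda_0 - \lambda^{*}, A\hat{z}_k - b} \geq -R_\text{d}\norm{A\hat{z}_k - b}$, exactly as in the proof of Theorem~\ref{theorem_fesa}. This produces a quadratic inequality in $\norm{A\hat{z}_k - b}$ whose admissible root satisfies $\norm{A\hat{z}_k - b} \leq \tfrac{2L_{\mathrm{d}}R_\text{d}}{S_k} + \tfrac{1}{S_k}\sqrt{2L_{\mathrm{d}}C_Z\ei\sum_{j=0}^k S_j}$ after applying $\sqrt{p+q}\leq\sqrt{p}+\sqrt{q}$. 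Inserting the estimates $S_k > 0.25(k+1)(k+2)$ and $\sum_{j=0}^k S_j < (k+1)(k+2)(k+3)/3$ recorded after \eqref{eq:cond_theta} and unfolding $C_Z$ then gives exactly $v(k,\ei)$. The main obstacle is the second paragraph: recognizing that the hidden maximizer of the inexact quadratic model coincides with $\lambda_0 + L_{\mathrm{d}}^{-1}S_k(A\hat{z}_k - b)$, and that completing the square against $\lambda^{*}$ is what converts the accumulated recursion \eqref{eq_rec} into a clean bound on $\norm{\lambda^{\mathrm{opt}}_k - \lambda^{*}}$; once that is in hand, the remaining manipulation is identical to the gradient case.
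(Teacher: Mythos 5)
Your proposal is correct: every step checks out, and it terminates in exactly the quadratic inequality $\frac{S_k}{2L_{\mathrm{d}}}\norm{A\hat{z}_k-b}^2 - R_\text{d}\norm{A\hat{z}_k-b} \leq C_Z\ei S_k^{-1}\sum_{j=0}^k S_j$ that the paper also reaches, from which $v(k,\ei)$ follows by the same root bound. However, your route through the middle is genuinely different from the paper's. You work entirely on the dual side: you identify the maximizer $\lambda_k^{\mathrm{opt}} = \lambda_0 + L_{\mathrm{d}}^{-1}S_k(A\hat{z}_k-b)$ of the model in \eqref{eq_rec}, lower-bound $\psi(\lambda^{*})$ by $S_k f^{*} - \tfrac{L_{\mathrm{d}}}{2}R_\text{d}^2$ using only the right-hand (linearization) inequality of \eqref{ineq_approx} at each $\lambda_j$, and convert \eqref{eq_rec} plus $d_{\rho}(\mu_k)\leq f^{*}$ into a bound on $\norm{\lambda_k^{\mathrm{opt}}-\lambda^{*}}$ --- a faithful analogue of \eqref{eq:est2} and \eqref{new_form} from the gradient case, as you intended. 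The paper instead pulls the primal objective into the recursion: it applies Jensen's inequality (convexity of $f$ and of $\norm{\cdot}^2$) to lower-bound the weighted linearizations by $S_k f(\hat{z}_k) + S_k\iprod{\lambda, A\hat{z}_k-b} + \tfrac{S_k}{2L_{\mathrm{d}}}\norm{A\hat{z}_k-b}^2$, computes the max over $\lambda$ explicitly in \eqref{eq:proof12} (which is your completion of the square at the same point $\lambda_k^{\mathrm{opt}}$), and then eliminates $f(\hat{z}_k)$ via $d_{\rho}(\lambda^{*}) \leq \mathcal{L}_{\rho}(\hat{z}_k,\lambda^{*})$ in \eqref{eq:proof11}. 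What each buys: your argument is leaner --- it never needs convexity of $f$ or the averaging inequality on $\norm{\cdot}^2$, and it makes transparent that infeasibility is controlled by the distance of the aggregated dual point from $\lambda^{*}$; the paper's heavier bookkeeping pays off one theorem later, since its intermediate inequality \eqref{eq:proof10}, which retains $f(\hat{z}_k)$, is reused verbatim to prove the primal suboptimality upper bound in Theorem \ref{theorem_primal_optim}, whereas under your approach that bound would need a separate derivation of essentially \eqref{eq:proof10}.
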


\begin{proof}
By the definition of $\bar{d}_{\rho}$ and
$\nabla{\bar{d}}_{\rho}$, the convexity of $f$ and
$\norm{\cdot}^2$, and inequality \eqref{primal_point} we have:
\begin{align*}
\sum_{j=0}^k\theta_j \!\left[ \bar{d}_{\rho}(\lambda_j) +
\iprod{\nabla{\bar{d}}_{\rho}(\lambda_j),\lambda \!-\!  \lambda_j}
\right] &=
\!\sum_{j=0}^k\theta_jf(\bar{z}_j) + S_k\iprod{\lambda, A\hat{z}_k \!-\! b} \!+\! \sum_{j=0}^k\theta_j\frac{\rho}{2}\norm{A\bar{z}_j-b}^2 \nonumber\\
&\geq S_kf(\hat{z}_k) + S_k\iprod{\lambda, A\hat{z}_k - b)} +
\frac{S_k}{2L_{\mathrm{d}}}\norm{A\hat{z}_k - b}^2.
\end{align*}
Substituting this inequality into \eqref{eq_rec} we obtain:
\begin{align}\label{eq:proof10}
d_{\rho}(\mu_k) &\geq  f(\hat{z}_k)\! +\! \max_{\lambda\in\rset^m}\Big\{\! \iprod{\lambda, A\hat{z}_k - b)} -
\frac{L_{\mathrm{d}}}{2S_k}\norm{\lambda\!-\!\lambda_0}^2\Big\}  \nonumber\\
& ~~~+ \frac{\rho}{2}\norm{A\hat{z}_k - b}^2 - C_Z\ei
S_k^{-1}\sum_{j=0}^kS_j.
\end{align}
On the one hand, we can write:
\begin{align}\label{eq:proof11}
d_{\rho}(\mu_k) - f(\hat{z}_k) - \frac{\rho}{2}\norm{A\hat{z}_k - b}^2 &\leq d_{\rho}(\lambda^{*}) - f(\hat{z}_k) - \frac{\rho}{2}\norm{A\hat{z}_k - b}^2
\nonumber\\
& = \min_{z \in Z}\mathcal{L}_{\rho}(z,\lambda^{*}) \!-\!
f(\hat{z}_k) \!-\! \frac{\rho}{2}\norm{A\hat{z}_k\!-\!b}^2 \leq
\iprod{\lambda^{*}, A \hat{z}_k\!-\!b}.
\end{align}
On the other hand, we have:
\begin{align}\label{eq:proof12}
\max_{\lambda\in\rset^m}&\Big\{ -\frac{L_{\mathrm{d}}}{2S_k}\norm{\lambda-\lambda_0}^2 + \iprod{\lambda, A \hat{z}_k-b} \Big\} =
\frac{S_k}{2L_{\mathrm{d}}}\norm{A\hat{z}_k-b}^2 +
\iprod{\lambda_0, A \hat{z}_k-b}.
\end{align}
Substituting \eqref{eq:proof11} and \eqref{eq:proof12} into
\eqref{eq:proof10} we obtain:
\begin{equation*}
\frac{S_k}{2L_{\mathrm{d}}}\norm{A\hat{z}_k-b}^2 +
\iprod{\lambda_0 - \lambda^{*}, A \hat{z}_k-b} \leq  C_Z\ei
S_k^{-1}\sum_{j=0}^kS_j.
\end{equation*}
If we define $\xi := \norm{A\hat{z}_k-b}$, then the last inequality implies $\frac{(k+1)(k+2)}{8L_{\mathrm{d}}}\xi^2 - R_\text{d}\xi
\leq \frac{4(k+3)}{3}C_Z\ei$. Therefore, we obtain $\xi \leq \nu(k,\ei)$, where $\nu(\cdot,\cdot)$ is defined in \eqref{eq:primal_fesa}.
\end{proof}

Finally, we characterize the primal suboptimality for optimization
problem \eqref{eq:primal_prob}.

\begin{theorem}\label{theorem_primal_optim}
Under the conditions of Theorem \ref{theorem_primal_fesa}, the
following  estimates hold on primal suboptimality:
\begin{align*}
-&\Big[ \norm{\lambda^{*}}  \!+\!
\frac{\rho}{2}\nu(k,\varepsilon_{\mathrm{in}}) \Big]
\nu(k,\varepsilon_{\mathrm{in}}) \leq f(\mathbf{ \hat
z}_k)\!-\!f^{*} \!\leq
\frac{2L_{\mathrm{d}}\norm{\lambda_0}^2}{(k\!+\!1)(k\!+\!2)}
  +\frac{4(k\!+\!3)}{3}\left(\!1\!+\!\sqrt{2\Lp}R_{\text{p}}\!\right)\ei.
\end{align*}
\end{theorem}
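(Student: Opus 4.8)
The plan is to prove the two bounds separately, mirroring the structure of the corresponding gradient-method result in Theorem~\ref{theorem_primal}, but now using the faster decay rates supplied by the fast gradient scheme.

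For the \textbf{left-hand (lower) bound}, I would argue exactly as in the first half of the proof of Theorem~\ref{theorem_primal}. Since Slater's condition gives $f^{*} = d_{\rho}(\lambda^{*})$, and $\hat{z}_k \in Z$ (because it is a convex combination of the $\bar{z}_j \in Z$), I would write
\begin{equation*}
f^{*} = d_{\rho}(\lambda^{*}) \leq \mathcal{L}_{\rho}(\hat{z}_k, \lambda^{*}) = f(\hat{z}_k) + \iprod{\lambda^{*}, A\hat{z}_k - b} + \frac{\rho}{2}\norm{A\hat{z}_k - b}^2,
\end{equation*}
then apply Cauchy--Schwarz to the inner-product term and invoke the feasibility estimate $\norm{A\hat{z}_k - b} \leq \nu(k,\ei)$ from Theorem~\ref{theorem_primal_fesa}. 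Rearranging yields $f(\hat z_k) - f^{*} \geq -\big[\norm{\lambda^{*}} + \tfrac{\rho}{2}\nu(k,\ei)\big]\nu(k,\ei)$, which is precisely the claimed lower bound. This step is routine and uses only the feasibility theorem.

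For the \textbf{right-hand (upper) bound}, the natural route is to exploit the recursion in Lemma~\ref{prop_rec}. I would start from \eqref{eq_rec}, restrict the maximization on the right-hand side to the particular choice $\lambda = \lambda_0$ (or, more cleanly, drop the quadratic penalty term which is nonpositive at a suitable point), so that the $\iprod{\nabla \bar d_\rho(\lambda_j), \lambda - \lambda_j}$ contributions telescope. Combining this with the linear-model bound $d_{\rho}(\mu_k) \leq f^{*}$ and the definition $\bar d_\rho(\lambda_j) = \mathcal{L}_\rho(\bar z_j, \lambda_j)$, together with the convexity of $f$ applied to the average $\hat z_k$ from \eqref{primal_point}, should produce
\begin{equation*}
S_k\big(f(\hat z_k) - f^{*}\big) \leq \frac{L_{\mathrm{d}}}{2}\norm{\lambda_0}^2 + C_Z\,\ei \sum_{j=0}^k S_j,
\end{equation*}
where $C_Z = 1 + \sqrt{2\Lp}R_{\text{p}}$. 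Dividing by $S_k$ and inserting the estimates $S_k > 0.25(k+1)(k+2)$ and $\sum_{j=0}^k S_j < (k+1)(k+2)(k+3)/3$ converts the $1/S_k$ factor into the $\tfrac{2L_{\mathrm{d}}}{(k+1)(k+2)}$ prefactor on $\norm{\lambda_0}^2$ and the ratio $S_k^{-1}\sum_j S_j$ into the $\tfrac{4(k+3)}{3}$ factor multiplying $C_Z\ei$. This reproduces the stated upper bound.

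The main obstacle I anticipate is the bookkeeping in the upper-bound derivation: one must choose the maximizing $\lambda$ in \eqref{eq_rec} judiciously so that the objective-function terms separate cleanly from the feasibility (penalty) terms, and then carefully track how the inexactness aggregate $C_Z\ei\sum_{j=0}^k S_j$ scales after division by $S_k$. The lower bound is essentially immediate once Theorem~\ref{theorem_primal_fesa} is in hand; the delicate part is ensuring the correct constants emerge in the upper bound, which is where the growth estimates on $\set{\theta_k}$ and $S_k$ established in \eqref{eq:cond_theta} do the real work.
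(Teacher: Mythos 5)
Your lower-bound argument is exactly the paper's (it repeats the first half of the proof of Theorem \ref{theorem_primal}: $f^{*}=d_{\rho}(\lambda^{*})\leq \mathcal{L}_{\rho}(\hat z_k,\lambda^{*})$, Cauchy--Schwarz, then Theorem \ref{theorem_primal_fesa}), and your target intermediate inequality for the upper bound, $S_k\big(f(\hat z_k)-f^{*}\big)\leq \frac{L_{\mathrm{d}}}{2}\norm{\lambda_0}^2 + C_Z\ei\sum_{j=0}^k S_j$, is the right one. The gap is that this inequality does \emph{not} follow from the substitution $\lambda=\lambda_0$ in \eqref{eq_rec}. Since $\mathcal{L}_{\rho}(\bar z_j,\cdot)$ is affine in the multiplier, $\bar d_{\rho}(\lambda_j)+\iprod{\nabla\bar d_{\rho}(\lambda_j),\lambda-\lambda_j}=\mathcal{L}_{\rho}(\bar z_j,\lambda)$ exactly (nothing telescopes), so with $\lambda=\lambda_0$, after Jensen, \eqref{eq_rec} gives
\begin{equation*}
S_k d_{\rho}(\mu_k)\;\geq\; S_k\Big[f(\hat z_k)+\iprod{\lambda_0, A\hat z_k-b}+\tfrac{\rho}{2}\norm{A\hat z_k-b}^2\Big]-C_Z\ei\sum_{j=0}^k S_j.
\end{equation*}
The only quadratic available to absorb the cross term $\iprod{\lambda_0,A\hat z_k-b}$ is $\tfrac{\rho}{2}\norm{A\hat z_k-b}^2$, whose coefficient does not grow with $k$; completing the square yields only $f(\hat z_k)-f^{*}\leq \frac{L_{\mathrm{d}}}{2}\norm{\lambda_0}^2+\frac{4(k+3)}{3}C_Z\ei$, with a \emph{non-decaying} $\norm{\lambda_0}^2$ term (the $S_k$ multiplying it cancels upon division). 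This is strictly weaker than the claimed $\frac{2L_{\mathrm{d}}\norm{\lambda_0}^2}{(k+1)(k+2)}$ and fails to certify the $\mathcal{O}(\eo)$ rate for $k_{\mathrm{out}}=\mathcal{O}(\sqrt{1/\eo})$ unless $\lambda_0=0$.

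The missing idea is to keep the maximization over $\lambda$ (equivalently, substitute the maximizer $\lambda=\lambda_0+\frac{S_k}{L_{\mathrm{d}}}(A\hat z_k-b)$, not $\lambda_0$). This is precisely what the paper does via \eqref{eq:proof10} and \eqref{eq:proof12}: after Jensen it evaluates
\begin{equation*}
\max_{\lambda\in\rset^m}\Big\{\iprod{\lambda, A\hat z_k-b}-\tfrac{L_{\mathrm{d}}}{2S_k}\norm{\lambda-\lambda_0}^2\Big\}
=\tfrac{S_k}{2L_{\mathrm{d}}}\norm{A\hat z_k-b}^2+\iprod{\lambda_0, A\hat z_k-b},
\end{equation*}
and the crucial point is that the quadratic now absorbing the cross term has coefficient $\tfrac{S_k}{2L_{\mathrm{d}}}$, which grows like $k^2$; completing the square therefore leaves $-\frac{L_{\mathrm{d}}}{2S_k}\norm{\lambda_0}^2\geq -\frac{2L_{\mathrm{d}}}{(k+1)(k+2)}\norm{\lambda_0}^2$. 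With that single correction, the rest of your outline (dropping $\tfrac{\rho}{2}\norm{A\hat z_k-b}^2\geq 0$, using $d_{\rho}(\mu_k)\leq f^{*}$, and the growth estimates $S_k>\tfrac{1}{4}(k+1)(k+2)$, $\sum_{j=0}^k S_j<\tfrac{1}{3}(k+1)(k+2)(k+3)$) goes through and coincides with the paper's proof.
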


\begin{proof}
The left-hand side inequality can be obtained similarly as in
Theorem \ref{theorem_primal}. We now prove the right-hand side. From
\eqref{eq:proof10} and \eqref{eq:proof12} we have:
\begin{align*}
d_{\rho}(\mu_k) &\geq f(\hat{z}_k) + \frac{S_k}{2L_{\mathrm{d}}}\norm{A\hat{z}_k-b}^2 + \iprod{\lambda_0, A \hat{z}_k-b} + \frac{\rho}{2}\norm{A\hat{z}_k - b}^2
- C_Z\ei S_k^{-1}\sum_{j=0}^kS_j \nonumber\\
&\geq f(\hat{z}_k) - \frac{2L_{\mathrm{d}}}{(k+1)(k+2)}\norm{\lambda_0}^2 - \frac{4(k+3)}{3}C_Z\ei.
\end{align*}
Therefore, we get:
\begin{align*}
f(\mathbf{ \hat z}_k) - d_{\rho}(\mu_k) \leq
\frac{2L_{\mathrm{d}}}{(k+1)(k+2)}\norm{\lambda_0}^2 +
\frac{4(k+3)}{3}C_Z\ei.
\end{align*}
Since $d_{\rho}(\mu_k) \leq f^{*}$, we obtain the right-hand side
inequality  from the last relation.
\end{proof}

Similar to the previous subsection,   we assume that we fix the
outer accuracy $\eo$ and the goal is to find $k_{\mathrm{out}}$ and
a relation between $\eo$ and $\ei$ such that after
$k_{\mathrm{out}}$ outer iterations of the scheme \eqref{update_1}
relations \eqref{eq:out1} holds. We can take e.g.:
\begin{equation}
\label{eq_choice_IDFGM} k_{\mathrm{out}} := \left\lfloor
2R_\text{d}\sqrt{\frac{L_{\mathrm{d}}}{\eo}}\right\rfloor ~
\text{and} ~ \ei :=
\frac{3}{8\left(1\!+\!\sqrt{2\Lp}R_{\text{p}}\right)\!(k_{\mathrm{out}}\!+\!3)}\eo.
\end{equation}
Using now Theorems \ref{theorem_dual_optim},
\ref{theorem_primal_fesa} and \ref{theorem_primal_optim} we can
conclude that the following bounds for dual  suboptimality, primal
infeasibility and primal suboptimality hold:
\begin{align*}
&~~~f^{*}  - d_{\rho}(\hat{\lambda}_{k_{\mathrm{out}}}) \leq \eo,~
\hat{z}_{k_{\text{out}}} \in Z,
~\norm{A \hat{z}_{k_{\text{out}}} -  b} \leq \frac{3}{R_\text{d}}\varepsilon_{\mathrm{out}} ~~\mathrm{and}\\
&-\left(\frac{3\norm{\lambda^*}}{R_\text{d}} +
\frac{9\rho}{2R_\text{d}^2}\eo\right)\eo \leq
f(\hat{z}_{k_{\text{out}}}) - f^{*} \leq
\left(\frac{\norm{\lambda_0}^2 +
R_\text{d}^2}{2R_\text{d}^2}\right)\eo.
\end{align*}
We can summarize the above convergence rate analysis into the
following algorithm.
\begin{algorithm}{\textit{Inexact dual fast gradient method} (\textbf{IDFGM})}\label{alg:A2}
\newline
\textbf{Initialization:} Choose parameters $\rho > 0$ and $\theta_0 := 1$. Choose an initial point $\lambda_0\in\rset^m$ and set $S_0 := 1$.
\newline
\textbf{Outer iteration: } For $k=0, 1,\dots, k_{\mathrm{out}}$, perform:
\begin{itemize}
\item[]Step 1. (\textit{Inner loop}). For given $\lambda_k$, solve
the inner problem \eqref{eq:inner_problem} with accuracy $\ei$,
such that one of the stopping criterions \eqref{eq:criterion_a} -
\eqref{eq:inner_criterion} are satisfied, to obtain $\bar{z}_k$.
\item[]Step 2. Form the approximate gradient vector of $d_{\rho}$
as $\nabla{\bar{d}}_{\rho}(\lambda_k) := A\bar{z}_k - b$.
\item[]Step 3. Update $\mu_k := \lambda_k +
L_{\mathrm{d}}^{-1}\nabla{\bar{d}}_{\rho}(\lambda_k)$. \item[]Step
4. Update $\theta_{k+1} := 0.5 \left(1 +
\sqrt{1+4\theta_k^2}\right)$,
 $S_{k+1} := S_k + \theta_{k+1}$ and $a_{k+1} := S_{k+1}^{-1}\theta_{k+1}$.
\item[]Step 4. Update $\lambda_{k+1} := (1-a_{k+1})\mu_k + a_{k+1} \left[
\lambda_0 +
L_{\mathrm{d}}^{-1}\sum_{j=0}^k\theta_j\nabla{\bar{d}}_{\rho}(\lambda_j)
\right]$.
\end{itemize}
\textbf{Output}: $\hat{z}_{k_{\mathrm{out}}} := S_{k_{\mathrm{out}}}^{-1}\sum_{j=0}^{k_{\mathrm{out}}}\theta_j\bar{z}_j$.
\newline
\end{algorithm}

As in previous section, the penalty parameter $\rho$ can  be also
updated adaptively by using the same  procedure as before.

\section{Complexity certification for linear MPC problems}\label{sec_MPC}

In this section we discuss different implementation aspects for the
application of the  algorithms derived  in Sections
\ref{subsec_dual_gradient} and \ref{subsec_dual_fast_gradient} in
the context of state-input constrained MPC for fast linear embedded
systems. We first prove that for linear MPC with quadratic stage and
final costs, the augmented Lagrangian function becomes strongly
convex and therefore the inner problems \eqref{eq:inner_problem} can
be solved in linear  time with a fast gradient scheme \cite{Nes:04}.
We also discuss how the different parameters, which appear in our
derived complexity bounds of Algorithms \eqref{iter_outer} and
\eqref{update_1}, can be computed such that tight estimates on the
total number of iterations can be derived and thus to facilitate the
implementation on linear embedded systems with state-input
constraints.

\subsection{Implementation aspects for MPC problems}\label{subsec_implement}
 We denote by $X_N$ a subset of the region of attraction for the
MPC scheme discussed in Section \ref{subsec_motivation}. A detailed
discussion on the stability of suboptimal MPC schemes can be found
e.g. in \cite{ScoMay:99}. For a given $x \in X_N$, we denote with
$z^{*}(x)$ an optimal solution for $(\textbf{P}(x))$ and with
$\lambda^{*}(x)$ an associated optimal multiplier. Usually, in MPC
problems the stage and final costs are  quadratic functions of the
form:
\begin{equation*}
\ell(x_i,u_i) := x_i^T Q x_i + u_i^T R u_i ~~ \text{and}
~~\ell_{\mathrm{f}}(x_N) := x_N^T P x_N,
\end{equation*}
where the matrices $Q$ and $P$ are positive semidefinite and $R$ is
positive definite. Note that in our formulation we do not require
strongly convex stage cost, i.e. we do not impose the matrices $Q$
and $P$ to be positive definite. The following lemma characterizes
the convexity properties of the augmented Lagrangian function.
\begin{lemma}\label{le:strong_covexity_of_MPC}
If the optimization problem \eqref{eq:Px} comes from a linear MPC
problem with quadratic stage and final costs, then the augmented
Lagrangian $\mathcal{L}_{\rho}(z,\lambda,x)$ is a strongly convex
quadratic function w.r.t. variable $z$.
\end{lemma}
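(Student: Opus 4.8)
The plan is to show that the Hessian of $\mathcal{L}_{\rho}(\cdot,\lambda,x)$ with respect to $z$ is positive definite, which for a quadratic function is equivalent to strong convexity. First I would observe that $\mathcal{L}_{\rho}$ is genuinely quadratic in $z$: the cost $f$ is quadratic, the coupling term $\iprod{\lambda, Az-b}$ is affine, and $\frac{\rho}{2}\norm{Az-b}^2$ is quadratic. Writing $H := \nabla^2_z f$, which is the block-diagonal matrix carrying a block $2Q$ for each free state $x_1,\dots,x_{N-1}$, a block $2P$ for $x_N$, and a block $2R$ for each input $u_0,\dots,u_{N-1}$, the Hessian of $\mathcal{L}_{\rho}$ is
\[
M := H + \rho A^T A .
\]
Since $Q,P\succeq 0$ and $R\succ 0$ give $H\succeq 0$, and $\rho A^TA\succeq 0$, we immediately have $M\succeq 0$, and the whole task reduces to ruling out a nontrivial null vector.

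Next I would examine when $z^T M z = z^T H z + \rho\norm{Az}^2 = 0$. Because both summands are nonnegative, this forces $z^T H z = 0$ \emph{and} $Az = 0$ simultaneously. The first condition together with $H\succeq 0$ and the block structure yields $Hz=0$; in particular the input blocks satisfy $2R\,u_j = 0$, and since $R\succ 0$ this gives $u_j = 0$ for every $j=0,\dots,N-1$. This is the crucial point where positive definiteness of $R$ (rather than of $Q$ or $P$) enters.

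The key step is then to propagate this to the states through $Az=0$. The equation $Az=0$ is precisely the homogeneous dynamics $x_1 = B_u u_0$ and $x_{i+1} = A_x x_i + B_u u_i$ for $i=1,\dots,N-1$ (the inhomogeneous term $b(x)$ disappears in the null-space condition). Substituting $u_j=0$ gives $x_1=0$, and then by induction $x_{i+1}=A_x x_i = 0$, so every state block vanishes as well. Hence $z=0$, which shows $\ker M = \set{0}$; combined with $M\succeq 0$ this yields $M\succ 0$, i.e. $\mathcal{L}_{\rho}(\cdot,\lambda,x)$ is strongly convex with parameter $\sigma_{\text{p}} = \lambda_{\min}(M) > 0$.

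The main obstacle is that neither piece of $M$ is positive definite on its own: $H$ is singular in the state directions whenever $Q$ or $P$ are merely semidefinite, and $A^TA$ is singular because $A$ is fat (full row rank but with fewer rows than columns). Strong convexity is therefore a genuine consequence of the interaction between the input regularization $R\succ 0$ and the causal structure of the dynamics, and the inductive propagation of positivity along the prediction horizon is the substantive content of the argument.
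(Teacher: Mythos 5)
Your proof is correct and follows essentially the same route as the paper: both reduce the claim to showing $H+\rho A^TA\succ 0$ by exploiting that the two summands are positive semidefinite, that $R\succ 0$ kills the input components, and that the null space of $A$ (the homogeneous dynamics) forces the states to vanish once the inputs do. The only cosmetic difference is the order of the steps — the paper parametrizes $\ker A$ as $z=[\tilde{A}u;\,u]$ and evaluates the quadratic form there, while you first deduce $u=0$ from $z^THz=0$ and then propagate zeros through the dynamics by induction; these are the same argument.
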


\begin{proof}
If we consider quadratic  costs in the MPC problem \eqref{eq:LMPC},
then the objective function $f$ is quadratic, i.e. $f(z) :=
\frac{1}{2} z^T H z$,  where the Hessian  $H := \text{diag}(\tilde
Q, \tilde R)$ is positive semidefinite, with $\tilde Q :=
\text{diag}(Q, \cdots, Q, P)$ and $\tilde R := \text{diag}(R,
\cdots, R)$. Note that $\tilde R$ is positive definite, since we
assume $R$ to be positive definite. Using these notations, we can
rewrite the augmented Lagrangian in the form (see Section
\ref{subsec_motivation}):
\begin{align*}
\mathcal{L}_{\rho}(z,\lambda,x) := \frac{1}{2}z^T(H + \rho A^T
A)z+ (A^T \lambda-\rho A^Tb(x))z - b(x)^T \lambda +
\frac{\rho}{2}b(x)^Tb(x).
\end{align*}
It is straightforward to see that since $H$ is positive
semidefinite, then $z^T(H + \rho A^T A)z > 0$ for all $z$ which
satisfy $Az \neq 0$. On the other hand, if we consider the following
set $\set{z \in \rset^n |~ Az = 0}$,   which comes from the linear
dynamics, we can rewrite equivalently this set as
$\set{z \in \rset^n~|~z = \left[\begin{array}{c} \tilde{A} u\\ u\\
\end{array}\right], u \in \prod_{i=1}^N U}$, where $u:=\left[u_0^T
\cdots u_{N-1}^T\right]^T$ and the matrix $\tilde{A}$ is obtained
from the matrices $A_x$ and $B_u$ describing the dynamics of the
system. Further, since $Az=0$, we can write $z^T(H + \rho A^T A)z =
z^T H z =  u^T \tilde{A}^T  \tilde Q \tilde{A} u + u^T \tilde R u
>0$ for all $u \neq 0$. The last inequality follows from the fact
that $\tilde R$ is positive definite. In conclusion, we proved that
$H + \rho A^T A$ is a positive definite matrix  and therefore
$\mathcal{L}_{\rho}(z,\lambda,x)$ is a quadratic strongly convex
function in $z$.
\end{proof}

The previous lemma shows that  in the linear MPC case with quadratic
costs the objective function of the inner subproblems
$\mathcal{L}_{\rho}$ are quadratic strongly convex in the first
variable $z$. Moreover,  $\mathcal{L}_{\rho}$ has also Lipschitz
continuous gradient. Note that the convexity parameter
$\sigma_{\mathrm{p}}$ of this function can be computed easily:
\begin{equation*}
\sigma_{\mathrm{p}} := \lambda_{\min}(H+\rho A^TA),
\end{equation*}
and the Lipschitz constant $L_{\mathrm{p}}$ of the gradient of
$\mathcal{L}_{\rho}$ is:
\begin{equation*}
L_{\mathrm{p}} := \lambda_{\max}(H +\rho A^TA).
\end{equation*}
Note that since $\mathcal{L}_{\rho}(z,\lambda,x)$ is strongly convex
and with Lipschitz continuous gradient in the variable $z$, by
solving the inner problem \eqref{eq:inner_problem} with a fast
gradient scheme we can ensure stopping criterion
\eqref{eq:criterion_a} in a linear number of inner iterations
\cite{Nes:04}. Since the estimate for the number of inner iterations
depends on $\sigma_{\mathrm{p}}$, $L_{\mathrm{p}}$ and also on the
diameter $R_{\text{p}}$ of the set $Z$, we can see immediately that
this diameter can be computed easily for cases when the set $Z$ has
a specific structure. Note that the set $Z$ is a Cartesian product
and thus:
\begin{equation*}
R_{\text{p}} := \sqrt{(N-1)D_x^2+D_{x_{\mathrm{f}}}^2+N D_u^2},
\end{equation*}
where $D_x$, $D_{x_{\mathrm{f}}}$ and $D_u$ denotes the diameters of
$X$, $X_{\mathrm{f}}$ and  $U$, respectively. These diameters can be
computed explicitly for constraints sets defined e.g. by  boxes or
Euclidean balls,  which typically appear in the context of MPC
problems.

 Further, the estimates for the number of outer iterations depend
on the norm of the dual optimal solution.  We now discuss how we can
bound $\norm{\lambda^{*}}$ in the MPC case. We make use of the
result from \cite{DevGli:12}:
\begin{lemma}\cite{DevGli:12}
For the family of MPC problems $(\textbf{P}(x))_{x \in X_N}$ we
assume that there exists $r > 0$ such that $B(0,r) \subseteq \set{A
z - b(x) ~|~ z \in Z, x \in X_N}$, where $B(0,r)$ denotes the
Euclidean ball in $\rset^{N(n_x+n_u)}$ with center $0$ and radius
$r$. Then, the following upper bound on the norm of the dual optimal
solutions of MPC problems $(\textbf{P}(x))$ holds:
\begin{equation*}
\norm{\lambda^{*}(x)} \leq \frac{\max_{z \in Z } \iprod{H z^{*}(x),
z-z^{*}(x)}}{\bar{r}} ~~\forall x \in X_N,
\end{equation*}
where $\bar{r} := \max \big\{r ~|~ B(0, r) \subseteq \set{A Z -
b(x) ~|~ x \in X_N }\big\}$.
\end{lemma}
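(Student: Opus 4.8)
The plan is to read off the bound from the first-order optimality (KKT) conditions of the primal--dual pair, using the Slater-type ball hypothesis to select a feasible test point that exposes $\norm{\lambda^{*}(x)}$. First I would note that, since Slater's condition holds, strong duality provides an optimal pair $(z^{*}(x), \lambda^{*}(x))$ with $A z^{*}(x) = b(x)$ and with $z^{*}(x)$ a minimizer of the ordinary Lagrangian $\mathcal{L}(\cdot,\lambda^{*}(x)) = f(\cdot) + \iprod{\lambda^{*}(x), A\cdot - b(x)}$ over $Z$; because $A z^{*}(x) = b(x)$, the augmented penalty term contributes nothing to the gradient at $z^{*}(x)$, so this is the same stationarity one gets from $\mathcal{L}_{\rho}$, and $\lambda^{*}(x)$ is indeed the common optimal multiplier. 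For the quadratic cost $f(z) = \tfrac{1}{2} z^T H z$ this stationarity is the variational inequality $\iprod{H z^{*}(x) + A^T \lambda^{*}(x), z - z^{*}(x)} \geq 0$ for all $z \in Z$.

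Next I would use primal feasibility $A z^{*}(x) = b(x)$ to write $A(z - z^{*}(x)) = Az - b(x)$, which turns the variational inequality into $\iprod{\lambda^{*}(x), Az - b(x)} \geq -\iprod{H z^{*}(x), z - z^{*}(x)}$ for every $z \in Z$. The idea is that the left-hand side becomes exactly $-\bar r\,\norm{\lambda^{*}(x)}$ once the residual $Az - b(x)$ is aligned anti-parallel to $\lambda^{*}(x)$, and this is precisely what the ball assumption supplies. Assuming $\lambda^{*}(x) \neq 0$ (the bound being trivial otherwise), I would invoke $B(0,\bar r) \subseteq \set{Az - b(x) ~|~ z \in Z}$ to produce $\bar z \in Z$ with $A\bar z - b(x) = -\bar r\,\lambda^{*}(x)/\norm{\lambda^{*}(x)}$. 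Substituting $z = \bar z$ collapses the left-hand side to $-\bar r\,\norm{\lambda^{*}(x)}$, so the inequality reads $\bar r\,\norm{\lambda^{*}(x)} \leq \iprod{H z^{*}(x), \bar z - z^{*}(x)} \leq \max_{z \in Z} \iprod{H z^{*}(x), z - z^{*}(x)}$, and dividing by $\bar r$ gives the claimed estimate.

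The step I expect to be the main obstacle is the correct use of the ball condition: the hypothesis is phrased on the set $\set{Az - b(x) ~|~ z \in Z, x \in X_N}$ running over all $x \in X_N$, whereas the argument requires, for the fixed $x$, a point $\bar z \in Z$ realizing a prescribed residual $Az - b(x)$ of norm $\bar r$. Since each $\set{Az - b(x) ~|~ z\in Z}$ is merely a translate of the fixed set $AZ$ by $-b(x)$, I would have to make $\bar r$ precise as a uniform (per-$x$) Slater radius, i.e.\ the largest $r$ with $B(0,r) \subseteq AZ - b(x)$ for every $x \in X_N$, so that the required $\bar z$ exists for each $x$; I would also dispatch the degenerate case $\lambda^{*}(x) = 0$ at the outset. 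Once this geometric selection is justified, the remaining manipulations are elementary.
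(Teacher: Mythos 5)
Your proof is correct, but note that the paper itself offers no proof of this lemma: it is stated by citation to \cite{DevGli:12}, so there is no internal argument to compare against. Your route --- extracting the variational inequality $\iprod{Hz^{*}(x) + A^T\lambda^{*}(x),\, z - z^{*}(x)} \geq 0$ from the saddle-point property of $(z^{*}(x),\lambda^{*}(x))$, rewriting $A(z - z^{*}(x)) = Az - b(x)$ via primal feasibility, and then using the ball condition to select $\bar z \in Z$ with $A\bar z - b(x) = -\bar r\,\lambda^{*}(x)/\norm{\lambda^{*}(x)}$ --- is precisely the standard argument underlying the cited result (the same mechanism appears in \cite{DevGli:12} and in the multiplier bounds of \cite{RicMor:11}), and every step checks out: the saddle-point property holds under the paper's standing Slater assumption, $\nabla f(z^{*}(x)) = Hz^{*}(x)$ for the quadratic cost, and the case $\lambda^{*}(x) = 0$ is indeed trivial since the numerator is nonnegative (take $z = z^{*}(x)$ in the max). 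Your diagnosis of the one genuine subtlety is also on target: as literally written, the hypothesis places the ball inside the union $\set{Az - b(x) ~|~ z \in Z,\, x \in X_N}$, which is too weak for the argument, because the prescribed residual $-\bar r\,\lambda^{*}(x)/\norm{\lambda^{*}(x)}$ must be realized by some $\bar z \in Z$ for the \emph{same} $x$; the proof requires $B(0,\bar r) \subseteq AZ - b(x)$ for every $x \in X_N$ (the per-$x$, or intersection, reading), which is the intended meaning of $\bar r$ and is exactly what makes the resulting bound uniform over $X_N$. With that reading fixed (and observing that $AZ - b(x)$ is compact, so the maximum defining $\bar r$ is attained), your proof is complete and self-contained, which is arguably more than the paper provides.
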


Based on the previous lemma, in \cite{RicMor:11}  upper bounds are
derived on  $\norm{\lambda^{*} (x)}$ for all $x \in X_N$  for linear
MPC problems  with  $X$, $X_{\mathrm{f}}$, $U$ and $X_N$ polyhedral
sets:
\begin{equation}\label{eq:bound_eth}
\mathcal{R}_\text{d} \geq \max_{x\in X_N}\norm{\lambda^*(x)}.
\end{equation}
Recall that  Lipschitz constant of the gradient of augmented dual
function is  $L_{\mathrm{d}}=1/\rho$.

\subsection{Total complexity of solving MPC problems}\label{subsec_total}
Now, we assume that we know the outer accuracy $\eo$ and we want to
estimate the total number of iterations and also the number of flops
per inner and outer iterations which have to be performed by
Algorithms \eqref{iter_outer} or \eqref{update_1} in order to solve
the MPC problem $(\textbf{P}(x))$. For both algorithms we assume the
initialization $\lambda_0=0$ and the inner problems are solved using
the stopping criterion \eqref{eq:criterion_a}.

First, we  discuss the complexity certificates  in the case when
problem \eqref{eq:Px} is solved using Algorithm \eqref{iter_outer}
for all $x \in X_N$. We denote by $k_{\mathrm{in}}^G$ the number of
inner iterations which has to be performed in order to solve each
inner problem and by $k_{\mathrm{out}}^G$ the number of outer
iterations. From the discussion in Section
\ref{subsec_dual_gradient} an upper bound on the number of outer
iterations is given by:
\begin{equation}
\label{eq_koutgrad} k_{\mathrm{out}}^G:= \left\lfloor
\frac{L_{\mathrm{d}}\mathcal{R}_{\text{d}}^2}{\eo}\right\rfloor.
\end{equation}
Since we have proved that in the MPC case
$\mathcal{L}_{\rho}(\cdot,\lambda,x)$ is strongly convex with
convexity parameter $\sigma_{\text{p}}$ and has also Lipschitz
continuous gradient with constant $L_{\text{p}}$, in order to find a
point $\bar z_{k_{\mathrm{in}}^G}(\lambda)$ such that
$\mathcal{L}_{\rho}(\bar
z_{k_{\mathrm{in}}^G}(\lambda),\lambda,x)-\mathcal{L}_{\rho}(z^*(\lambda),\lambda,x)\leq
\ei^2$ we can apply a fast gradient scheme. From Theorem 2.2.3 in
\cite{Nes:04} and taking into account that
$\ei=\frac{1}{2\left(1+\sqrt{L_{\mathrm{p}}}R_\mathrm{p}\right)}\eo$
(see \eqref{eq_choice_IDGM}) we get that the number of inner
iterations for finding such a point does not exceed:
\begin{equation}
\label{eq_kingrad} k_{\mathrm{in}}^G := \left\lfloor
2\sqrt{\frac{L_{\mathrm{p}}}{\sigma_{\mathrm{p}}}}\ln \left(
\frac{3\sqrt{L_{\text{p}}}R_{\text{p}}\left(1+\sqrt{2\Lp}R_{\text{p}}\right)}{\eo}
\right) \right\rfloor.
\end{equation}

In the case of Algorithm \eqref{update_1}, the number of outer
iterations, according to the discussion in Section
\ref{subsec_dual_fast_gradient}, is given by:
\begin{equation}
\label{eq_koutfastgrad} k_{\mathrm{out}}^{FG} := \left\lfloor 2
\mathcal{R}_{\text{d}}\sqrt{\frac{L_{\text{d}}}{\eo}}\right\rfloor.
\end{equation}
Taking into account that in this case we consider that the inner
accuracy is chosen as
$\ei=\frac{3}{8\left(1+\sqrt{L_{\mathrm{p}}}R_\mathrm{p}\right)
\left(k_{\mathrm{out}}^{FG}+3\right)}\eo$ (see
\eqref{eq_choice_IDFGM}), then the number of inner iterations for
solving each inner problem will be given by:
\begin{equation}
\label{eq_kinfastgrad} k_{\mathrm{in}}^{FG} := \left\lfloor
2\sqrt{\frac{L_{\mathrm{p}}}{\sigma_{\mathrm{p}}}}\ln \left(
\frac{5\sqrt{L_{\text{d}}}\mathcal{R}_d\sqrt{L_{\mathrm{p}}}R_{\text{p}}\left(1+\sqrt{2\Lp}R_{\text{p}}\right)}{\eo\sqrt{\eo}}\right)\right\rfloor.
\end{equation}
Further, we are also interested in finding the total number of flops
for both outer and inner iterations. For solving the inner problem
we use a simple fast gradient scheme for smooth strongly convex
objective functions, see e.g. \cite{Nes:04}. For this scheme, an
inner iteration will require $n^{\mathrm{flops}}_{\mathrm{in}} :=
N\left(3n_x^2+2n_xn_u+2n_u^2+10n_x+8n_u\right)$ flops. Regarding the
number of flops required by an outer iteration, the following values
can be established: $n^{\mathrm{flops,G}}_{\mathrm{out}} :=
N\left(2n_x^2+2n_xn_u+5n_x\right)+k_{\mathrm{in}}^{G}n^{\mathrm{flops}}_{\mathrm{in}}$
for Algorithm \eqref{iter_outer} and
$n^{\mathrm{flops,FG}}_{\mathrm{out}} :=
N\left(2n_x^2+2n_xn_u+10n_x\right)+k_{\mathrm{in}}^{FG}n^{\mathrm{flops}}_{\mathrm{in}}$
for Algorithm \eqref{update_1}, respectively.


\section{Numerical experiments} \label{sec_numerical}
In order to certify  the efficiency of the proposed algorithms, we
consider different numerical scenarios. We first analyze the
behavior of Algorithms \eqref{iter_outer} and \eqref{update_1} in
terms of CPU time and number of iterations for some practical MPC
problems and then we compare the CPU time, of our algorithms and of
other well known QP solvers used in the context of MPC, on randomly
generated QP problems. All the simulations were performed on a
Laptop with CPU Intel T6670 with 2.2GHz and 4GB RAM memory, using
Matlab R2008b. In all simulations we consider $\lambda_0=0$.

\subsection{Practical MPC problems}\label{subsec_simulations}
 In this section we apply  the newly developed  Algorithms
\eqref{iter_outer} and \eqref{update_1} on MPC problems for some
practical applications, i.e. a ball on plate system and an
oscillating masses system.

\subsubsection{Ball on plate system}
The first application discussed in this section is the ball on plate
system described in \cite{RicMor:11}. We consider   box constraints
for states $X$ and $X_\text{f}$, inputs $U$ and for the region of
attraction $X_N$ as in \cite{RicMor:11}, while for the stage costs
we take the matrices $Q = q_1q_1^T$, where $q_1 = [2 \;  1]^T$, $R =
1$ and we compute the terminal matrix $P$ as the solution of  the
LQR problem.

For different prediction horizons ranging  from $N = 5$ to $N=20$,
we analyze first the behavior of Algorithms \eqref{iter_outer} and
\eqref{update_1} in terms of the number of outer iterations. For
each prediction horizon length, we consider two different estimates
for the number of outer iterations depending on the way we compute
the upper bound on the optimal Lagrange multipliers $\lambda^*(x)$.
For Algorithm \eqref{iter_outer},  $k_{\mathrm{out}}^G$ is the
theoretical number of iterations  obtained using relation
\eqref{eq_koutgrad} with $\mathcal{R}_\mathrm{d}$ computed according
to \cite{RicMor:11} (see \eqref{eq:bound_eth}), while
$k_{\mathrm{out,samp}}^G$ is the average  number of iterations
obtained using our derived bound \eqref{eq_choice_IDGM} with
$R_\mathrm{d}$ computed exactly using \texttt{Gurobi 5.0.1} solver,
iterations which correspond to $50$ random initial states $x \in
X_N$. We also compute the average number of outer iterations
$k_{\mathrm{out,real}}^G$ observed in practice, obtained by imposing
stopping criteria $\abs{f(\hat z_{k_{\mathrm{out,real}}^G}) -
f^{*}}$ and $\norm{A \hat z_{k_{\mathrm{out,real}}^G} - b}$  less
than accuracy $\eo = 10^{-3}$. For Algorithm \eqref{update_1} we
compute in a similar way $k_{\mathrm{out}}^{FG}$ using
\eqref{eq_koutfastgrad}, $k_{\mathrm{out,samp}}^{FG}$ using
\eqref{eq_choice_IDFGM} and $k_{\mathrm{out,real}}^{FG}$ observed in
practice. In all simulations we  take  $\rho = 1$. The results for
both algorithms are reported in Figure
\ref{fig:outer_iterations_ball}.
\begin{figure}[h!]
\vskip-0.3cm
\centering\includegraphics[angle=0,height=4cm,width=12.5cm]{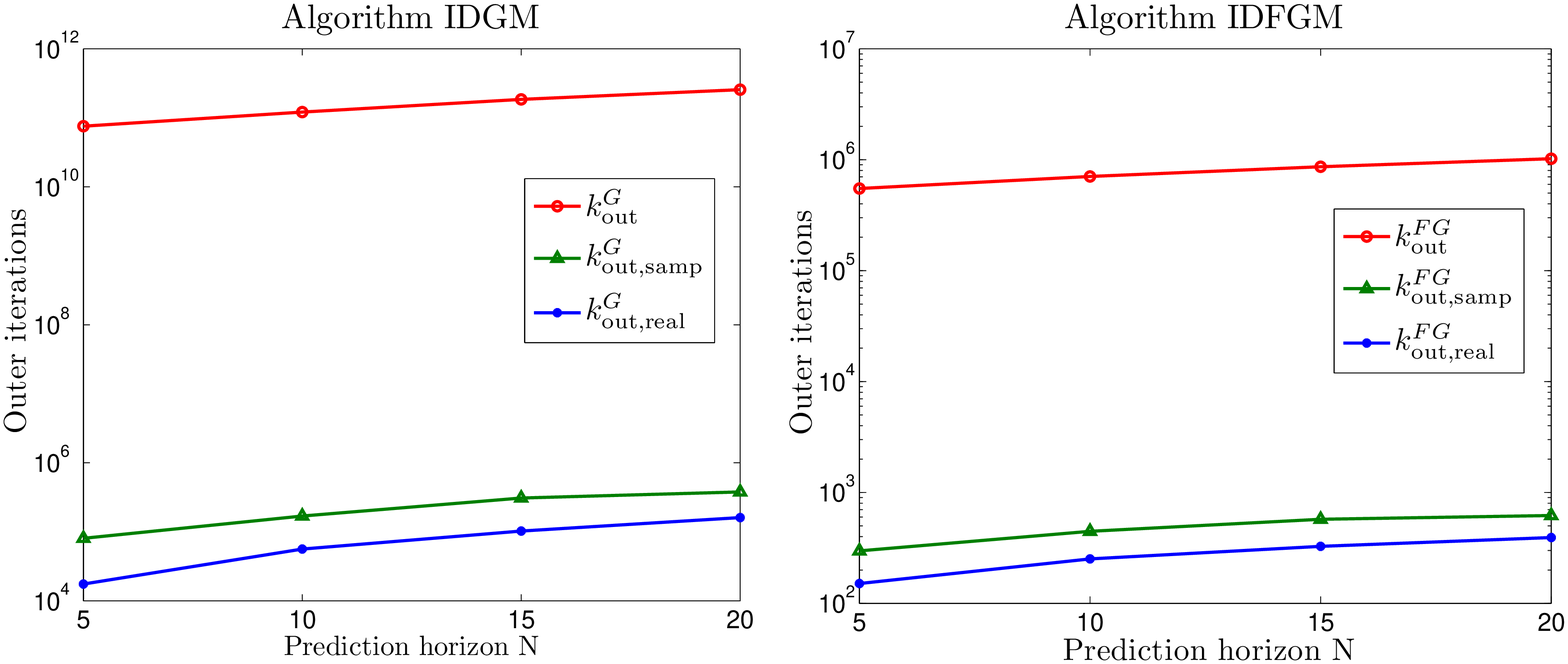}
\caption{Variation of $k_{\mathrm{out}}^i$,
$k_{\mathrm{out,samp}}^i$ and $k_{\mathrm{out,real}}^i$
($i=\set{G;FG}$) for Algorithm \eqref{iter_outer} (left) and
Algorithm \eqref{update_1} (right) w.r.t the prediction horizon $N$,
with accuracy $\eo=10^{-3}$.} \label{fig:outer_iterations_ball}
\end{figure}
We can observe that in practice Algorithm \eqref{update_1} performs
much better than Algorithm \eqref{iter_outer}. Also, we can notice
that the expected number of outer iterations
 $k_{\mathrm{out,samp}}^G$ and $k_{\mathrm{out,samp}}^{FG}$ obtained
from our derived bounds in Section \ref{sec_outer}  offer a good
approximation for the real number of iterations performed by the two
algorithms. Thus, these simulations show that our derived bounds in
Section \ref{sec_outer} are tight. On the other hand,
$k_{\mathrm{out}}^{FG}$ is about three orders of magnitude, while
$k_{\mathrm{out}}^G$ is about six orders of magnitude greater than
the real number of iterations.

In Figure \ref{fig:states_inputs} we also plot the evolution of the
states and inputs over the simulation horizon for a prediction
horizon $N = 5$ and an outer accuracy $\eo = 10^{-3}$. We observe
that the system is driven to the equilibrium point. Since we
obtained similar trajectories for the states and inputs  with both
algorithms, we present only the results for Algorithm
\eqref{update_1}.
\begin{figure}[h!]
\vskip-0.3cm
\centering\includegraphics[angle=0,height=4.0cm,width=12.5cm]{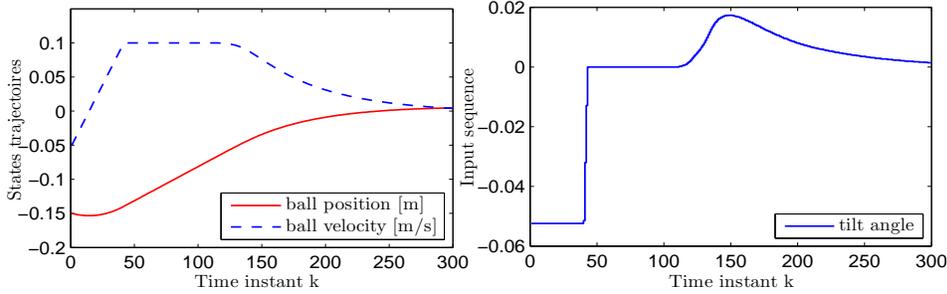}
\caption{The trajectories of the states and inputs  for a prediction
horizon $N = 5$ obtained using Algorithm \eqref{update_1} with
accuracy $\eo=10^{-3}$.} \label{fig:states_inputs}
\end{figure}

Since the number of outer iterations is also dependent on the way
the inner accuracy $\ei$ is chosen, we are also interested in  the
behavior of the two algorithms w.r.t to $\ei$. For this purpose we
apply Algorithms \eqref{iter_outer} and \eqref{update_1} for solving
the optimization problem \eqref{eq:Px} with a prediction horizon $N
= 20$, a fixed outer accuracy $\eo = 10^{-3}$ and varying $\ei$. In
Figure \ref{fig:inner_influence} we plot the average number of outer
iterations performed by the algorithms by taken $10$ random samples
 for the initial state $x \in X_N$.
\begin{figure}[h!]
\vspace{-0.1cm}
\centering\includegraphics[angle=0,height=4.0cm,width=12.5cm]{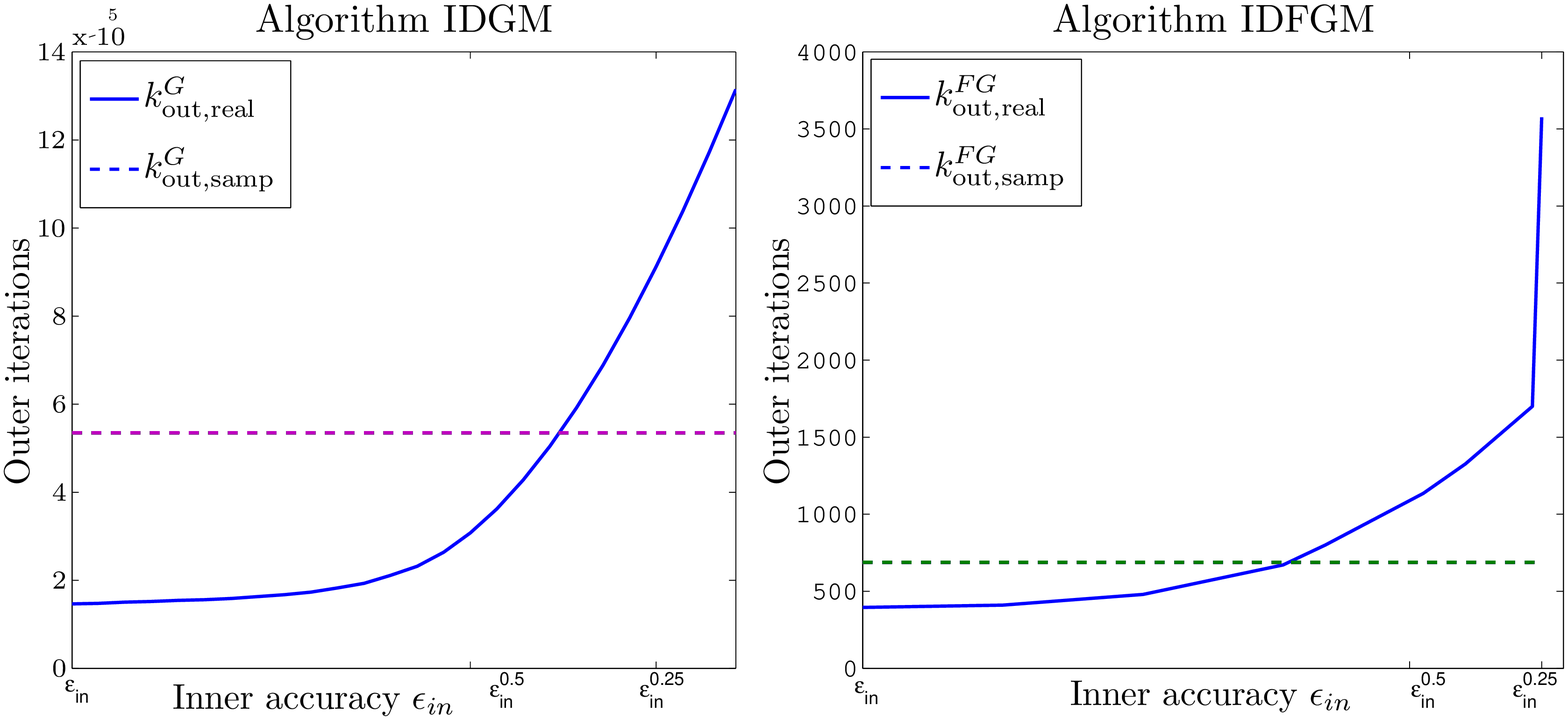}
\caption{The number of outer iterations performed by Algorithm
\eqref{iter_outer} (left) and Algorithm \eqref{update_1} (right)
with fixed outer accuracy $\eo=10^{-3}$ and different inner
accuracies $\ei$.} \label{fig:inner_influence}
\end{figure}
We observe that we can increase the inner accuracy $\ei$ derived in
Section \ref{sec_outer} up to a certain value and the algorithms
still perform a number of iterations less than the theoretical
bounds derived in Section \ref{sec_outer} for finding a suboptimal
solution. On the other hand, if the inner accuracy is too large, the
desired  suboptimality  cannot  be ensured in a finite number of
iterations. We see that Algorithm \eqref{iter_outer} is less
sensitive to the choice of the inner accuracy  $\ei$ than Algorithm
\eqref{update_1} due to the fact that Algorithm \eqref{update_1}
accumulates errors   (see Theorems \ref{theorem_primal} and
\ref{theorem_primal_optim}).

In conclusion, we notice from  simulations that on the one hand the
Algorithm \eqref{update_1} is faster than \eqref{iter_outer}, but on
the other hand that it is less robust. Thus, depending on the
requirements of the application,  one can choose between the two
algorithms.

\subsubsection{Oscillating masses}
The second example  is a system comprised of $M$ oscillating masses
connected by springs to each other and to walls on either sides,
having $2M$ states and $M-1$ inputs. For a detailed description of
the system, its parameters and constraints see \cite{WanBoy:10}. We
choose a quadratic stage cost with randomly generated positive
semidefinite matrices $Q \in \rset^{2M \times 2M}$, having
$\mathrm{rank}(Q) =M $, $R = 0.1 I_M$ and the final cost $P = Q$.

For this application we are  interested in  the CPU time. Thus,  we
consider only the Algorithm \eqref{update_1}, which is usually
faster than Algorithm \eqref{iter_outer}. In  simulations we vary
the number $M$ of masses and also the prediction horizon length $N$.
Further, we consider both formulations of the MPC problem: sparse QP
(i.e. we keep the states as variables) and dense QP (i.e. we
eliminate the states using the dynamics of the system). Our goal is
to compare the performances of Algorithm \eqref{update_1} and other
methods used in the framework of linear MPC.  Algorithm
\eqref{update_1} and \texttt{Gurobi 5.0.1} solver (\textbf{Gur1})
are used for solving the sparse formulation of the MPC problem.
\textbf{Alg. 1} in \cite{PatBem:12j,PatBem:12} and \texttt{Gurobi
5.0.1} (\textbf{Gur2}) are used for solving   the dense formulation
of the MPC problem. In the implementation of the Algorithm
\eqref{update_1} we consider an adaptive scheme in order to update
the penalty parameter $\rho$, similar to the one presented in
\cite{Ham:05}. Since the number of iterations is sensitive to the
choice of penalty parameter, we have also tuned the initial guess of
$\rho$. For each number of masses and prediction horizon, $50$
simulations were run starting from different random initial states,
We have considered the accuracy $\eo=10^{-3}$ and the stopping
criteria $\abs{f(\hat z_k ) - f^{*}}$ and $\norm{A \hat z_k - b}$
less than accuracy $\eo$.
\begin{table}[!h]
\begin{center}
\newcommand{\cell}[1]{{\!\!\!}#1{\!\!\!}}
\begin{scriptsize}
\caption{The average and maximum CPU time [s] (number of iterations)
for Algorithm \eqref{update_1}, \textbf{Alg. 1} in
\cite{PatBem:12j}, \texttt{Gurobi} solver for sparse form
(\textbf{Gur1}) and \texttt{Gurobi} solver for condensed form
(\textbf{Gur2}).} \label{table:oscillating_masses}
\begin{tabular}{|r|r|r|r|r|r|r|r|r|r|} \hline
M & N & \multicolumn{2}{|c|}{\textbf{IDFGM}} &
\multicolumn{2}{|c|}{\textbf{Gur1} } &
\multicolumn{2}{|c|}{\textbf{Alg. 1}} &
\multicolumn{2}{|c|}{\textbf{Gur2}} \\ \hline \cline{3-10} & & avg & max & avg & max & avg & max & avg & max    \\ \hline

\cell{5} &  \cell{5}  & \cell{0.03 (31)} & \cell{0.04 (33)} &
\cell{0.007 (10)}& \cell{0.008 (11)}&  \cell{0.05 (441)} &
\cell{0.07 (604)} & \cell{0.005 (9)} & \cell{0.008 (11)} \\ \hline

\cell{5}    & \cell{10} & \cell{0.07 (36)} & \cell{0.10 (51)} &
\cell{0.009 (11)}& \cell{0.010 (12)}&  \cell{0.13 (924)} &
\cell{0.18 (1331)} & \cell{0.007 (12)} & \cell{0.008 (13)} \\ \hline

\cell{5}    & \cell{20} & \cell{0.13 (65)}   & \cell{0.22 (110)}&
\cell{0.016 (11)}& \cell{0.017 (12)}&  \cell{0.33 (1199)} &
\cell{0.65 (2383)} & \cell{0.038 (12)} & \cell{0.043 (13)} \\ \hline

\cell{10}   & \cell{5}  & \cell{0.10 (28)}     & \cell{0.12 (30)} &
\cell{0.013 (10)}& \cell{0.014 (12)}&  \cell{0.24 (1611)} &
\cell{0.33 (2193)} & \cell{0.007 (10)} & \cell{0.008 (11)} \\ \hline

\cell{10}   & \cell{10} & \cell{0.25 (47)} & \cell{0.38 (72)}&
\cell{0.027 (11)}& \cell{0.028 (13)}& \cell{0.77 (2552)} &
\cell{1.34 (4449)} & \cell{0.037 (12)} & \cell{0.041 (13)} \\ \hline

\cell{10}   & \cell{20} & \cell{0.64 (70)}   & \cell{1.25 (135)} &
\cell{0.051 (12)}& \cell{0.055 (13)}&  \cell{2.75 (2331)} &
\cell{5.69 (4698)} & \cell{0.156 (10)} &  \cell{0.168 (12)} \\
\hline

\cell{20} &  \cell{5} &  \cell{0.23 (42)}   &  \cell{0.34 (64)}   &
\cell{0.039 (11)}   &  \cell{0.043 (12)}   &  \cell{0.99 (3066)}   &
\cell{1.45 (4481)} & \cell{0.020 (11)} &   \cell{0.025 (13)} \\
\hline

\cell{20} &  \cell{10} & \cell{1.54 (98)}    &  \cell{2.90 (193)} &
\cell{0.078 (12)} & \cell{0.084 (13)}   &  \cell{7.60 (5067)}   &
\cell{18.30 (11953)} & \cell{0.105 (12)} &  \cell{0.115 (13)} \\
\hline

\cell{20}   & \cell{20} &  \cell{8.2 (356)}   & \cell{14.9 (646)} &
\cell{0.230 (12)} & \cell{0.770 (13)}   & \cell{57.6 (12581)}    &
\cell{84.7 (18504)} & \cell{1.300 (12)}   &  \cell{2.120 (12)} \\
\hline
\end{tabular}
\end{scriptsize}
\end{center}
\end{table}

We can observe from Table \ref{table:oscillating_masses} that
Algorithm \eqref{update_1} outperforms \textbf{Alg. 1} in
\cite{PatBem:12j}, especially when the dimension of the problem
increases. On the other hand, we can notice that the solver
\texttt{Gurobi 5.0.1} performs much faster than our algorithm, since
the MPC problem is  sparse. However, the CPU times of the two
algorithms are comparable in the case of dense QP problems (see next
section).

\subsection{Random quadratic programming problems}\label{subsec_random_numerical}
In this section we compare the performance, in terms of CPU time, of
Algorithms \eqref{iter_outer} and \eqref{update_1} against some well
known QP solvers used for solving MPC problems:
\texttt{quadprog}~(Matlab R2008b), \texttt{Sedumi}~1.3,
\texttt{Cplex}~12.4 (IBM ILOG) and \texttt{Gurobi} 5.0.1.

We consider random QP problems of the form
\begin{equation*}
\min \limits_{\mathrm{lb} \leq z \leq \mathrm{ub}} \set{0.5z^TQz +
q^Tz: \;\; \text{s.t.} \;  Az = b},
\end{equation*}
where matrices $Q \in \rset^{r \times n}$ and $A \in
\rset^{\lceil\frac{n}{2}\rceil \times n}$ are taken from a normal
distribution with zero mean and unit variance. Matrix $Q$ is then
made positive semidefinite by transformation $Q \leftarrow Q^T Q$,
having $\mathrm{rank}(Q)$ ranging from $0.5n$ to $0.9n$. Further,
$\mathrm{ub} = - \mathrm{lb} = 1$ and $b$ is taken from a uniform
distribution.

We plot in Figure \ref{fig:qp_cpu} the average CPU time for each
solver, obtained by solving $50$ random QP's for each dimension $n$,
with an accuracy  $\eo=10^{-3}$ and the stopping criteria
$\abs{f(\hat z_k ) - f^{*}}$ and $\norm{A \hat z_k - b}$ less than
accuracy $\eo$. In the case of Algorithm \eqref{iter_outer}, at each
outer iteration we let the algorithm perform only $k_{\mathrm{in}}^G
= 50$ inner iterations. For the Algorithm \eqref{update_1} we
consider two scenarios: in the first one, we let the algorithm to
perform only $k_{\mathrm{in}}^{FG}= 100$ inner iterations, while in
the second one we use the theoretic number of inner iterations
obtained in Section \ref{subsec_total} (see \eqref{eq_kinfastgrad}).
As described previously, in our algorithms we consider an adaptive
scheme for updating the penalty parameter $\rho$, similar to the one
presented in \cite{Ham:05}. We can observe that even if the
Algorithms \eqref{iter_outer} and \eqref{update_1} are well suited
for embedded applications, i.e. the implementation of the iterates
is very simple, the iteration complexity is low and also the number
of iterations for finding an approximate solution can be easily
predicted, the computational time is comparable with the one of the
other solvers used in the context of MPC.
\begin{figure}[h!]
\vskip-0.2cm \centering\includegraphics[angle=0,height =
4.5cm,width=12.5cm]{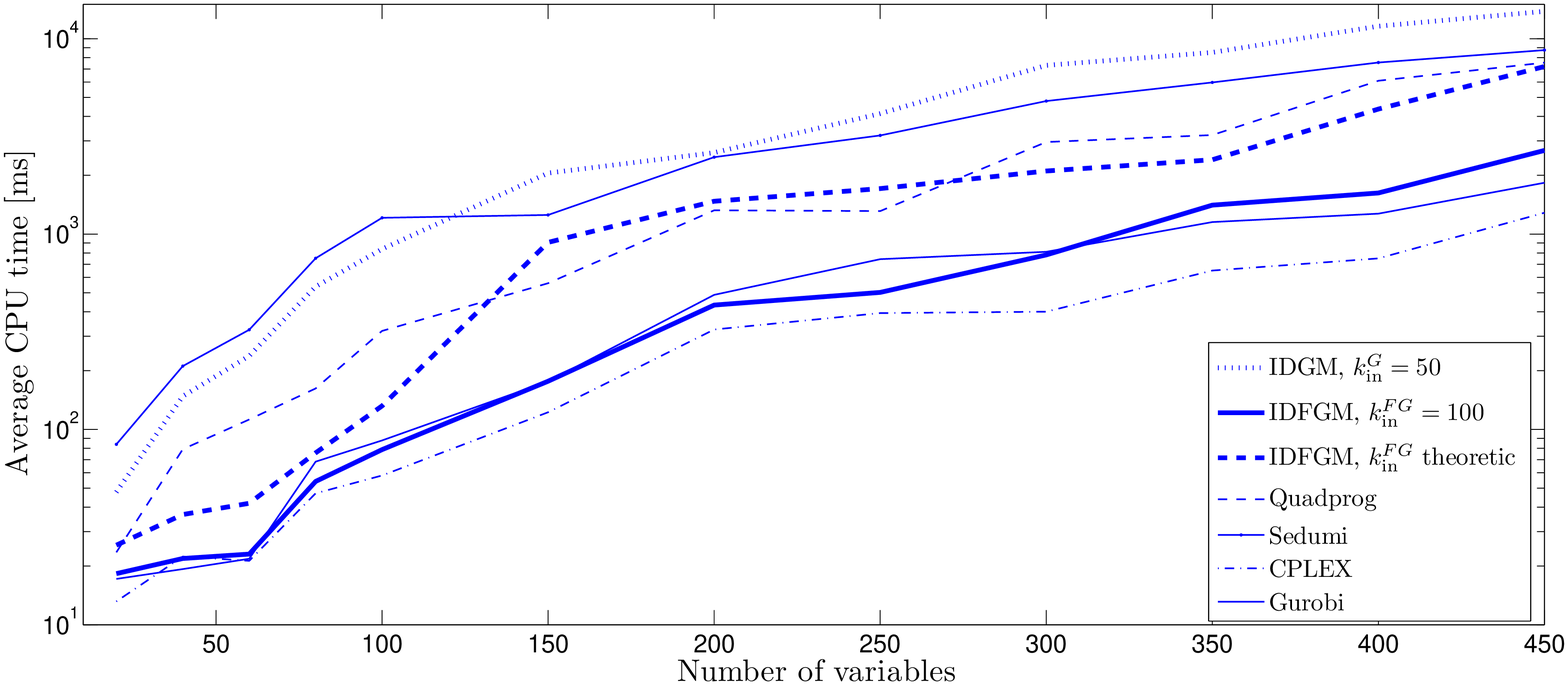}  \caption{Average CPU time for
solving QP problems of different sizes.} \label{fig:qp_cpu}
\end{figure}
Although the obtained averaged CPU times are comparable, we cannot
compare the exact computation complexity since for this purpose an
equivalence between different stopping criteria of each solver
should be studied, like e.g. the maximum violation of the
constraints.

\section{Conclusions}\label{sec_con}
Motivated by  MPC problems for fast embedded linear systems, we have
proposed two dual  gradient based methods for solving the augmented
Lagrangian dual of a primal convex optimization problem with
complicating linear constraints.   We have moved the complicating
constraints in the cost using augmented Lagrangian framework  and
solved the dual problem using gradient and fast gradient methods
with inexact information. We  have solved the inner subproblems only
up to a certain accuracy, discussed the relations between the inner
and the outer accuracy of the primal and dual problems and derived
tight estimates on both primal and dual suboptimality and also on
feasibility violation. We have also discussed some implementation
issues of the new algorithms for embedded linear MPC problems and
tested them on  several examples.

\vskip0.3cm
\begin{footnotesize}
\noindent\textbf{Acknowledgements.} The research leading to these
results has received funding from: the European Union, Seventh
Framework Programme (FP7-EMBOCON/2007--2013) under grant agreement
no 248940; CNCS-UEFISCDI (project TE-231, no. 19/11.08.2010); ANCS
(project PN II, no. 80EU/2010); Sectoral Operational Programme
Human Resources Development 2007-2013 of the Romanian Ministry of
Labor, Family and Social Protection through the Financial
Agreement POSDRU/89/1.5/S/62557 and POSDRU/107/1.5/S/76909;
Research Council KUL: PFV/10/002 (OPTEC), GOA/10/09 - MaNet,
GOA/10/11; IOF/KP/SCORES4CHEM, G.0320.08, G.0377.09; IUAP - P7
(DYSCO); FP7 - SADCO (MC ITN-264735), ERC - ST - HIGHWIND
(259-166), Eurostars - SMART.
\end{footnotesize}

\bibliographystyle{plain}

\end{document}